\def\NZQ{\mathbb}               % the font for N,Z,Q,R,C
\def\NN{{\NZQ N}}
\def\ZZ{{\NZQ Z}}
\def\frk{\mathfrak}               % font for "Fraktur"
\def\mm{{\frk m}}
\def\Phi{{\frk N}}
\def\zb{{\bold z}}
\def\opn#1#2{\def#1{\operatorname{#2}}} % to make operators
\opn\chara{char} \opn\length{\ell} \opn\pd{pd} \opn\rk{rk}
\opn\projdim{proj\,dim} \opn\injdim{inj\,dim} \opn\rank{rank}
\opn\depth{depth} \opn\grade{grade} \opn\height{height}
\opn\embdim{emb\,dim} \opn\codim{codim}
\opn\Tr{Tr} \opn\bigrank{big\,rank}
\opn\superheight{superheight}\opn\lcm{lcm}
\opn\trdeg{tr\,deg}%\emph{
\opn\reg{reg} \opn\lreg{lreg} \opn\ini{in} \opn\lpd{lpd}
\opn\size{size}\opn{\mult}{mult}
\opn\div{div} \opn\Div{Div} \opn\cl{cl} \opn\Cl{Cl}
\opn\Spec{Spec} \opn\Supp{Supp} \opn\supp{supp} \opn\Sing{Sing}
\opn\Ass{Ass} \opn\Min{Min}
\opn\Ann{Ann} \opn\Rad{Rad} \opn\Soc{Soc}
\opn\Syz{Syz} \opn\Im{Im} \opn\Ker{Ker} \opn\Coker{Coker}
\opn\Am{Am} \opn\Hom{Hom} \opn\Tor{Tor} \opn\Ext{Ext}
\opn\End{End} \opn\Aut{Aut} \opn\id{id} \opn\ini{in}
\opn\nat{nat}
\opn\pff{pf}%   \pf exists already
\opn\Pf{Pf} \opn\GL{GL} \opn\SL{SL} \opn\mod{mod} \opn\ord{ord}
\opn\Gin{Gin}
\opn\Hilb{Hilb}\opn\adeg{adeg}\opn\std{std}\opn\ip{infpt}
\opn\Pol{Pol}
\opn\sat{sat}
\opn\Var{Var}
\opn\Gen{Gen}
\opn\aff{aff} \opn\con{conv} \opn\relint{relint} \opn\st{st}
\opn\lk{lk} \opn\cn{cn} \opn\core{core} \opn\vol{vol}
\opn\link{link} \opn\star{star}
\opn\gr{gr}
\def\pot#1#2{#1[\kern-0.28ex[#2]\kern-0.28ex]}
\opn\dirlim{\underrightarrow{\lim}}
\opn\inivlim{\underleftarrow{\lim}}
\let\union=\cup
\let\sect=\cap
\let\tensor=\otimes
\let\iso=\cong
\let\Union=\bigcup
\let\Dirsum=\bigoplus
\let\to=\rightarrow
\let\To=\longrightarrow
\def\Implies{\ifmmode\Longrightarrow \else
        \unskip${}\Longrightarrow{}$\ignorespaces\fi}
\def\implies{\ifmmode\Rightarrow \else
        \unskip${}\Rightarrow{}$\ignorespaces\fi}
\def\iff{\ifmmode\Longleftrightarrow \else
        \unskip${}\Longleftrightarrow{}$\ignorespaces\fi}
\newtheorem{Theorem}{Theorem}[section]
\newtheorem{Lemma}[Theorem]{Lemma}
\newtheorem{Corollary}[Theorem]{Corollary}
\newtheorem{Proposition}[Theorem]{Proposition}
\newtheorem{Example}[Theorem]{Example}
\newtheorem{Conjecture}[Theorem]{Conjecture}
\let\epsilon\varepsilon
\let\phi=\varphi
\let\kappa=\varkappa
\def\qed{\ifhmode\textqed\fi
      \ifmmode\ifinner\quad\qedsymbol\else\dispqed\fi\fi}
\def\textqed{\unskip\nobreak\penalty50
       \hskip2em\hbox{}\nobreak\hfil\qedsymbol
       \parfillskip=0pt \finalhyphendemerits=0}
\def\dispqed{\rlap{\qquad\qedsymbol}}
\opn\dis{dis}
\def\pnt{{\raise0.5mm\hbox{\large\bf.}}}
\opn\Lex{Lex}
\begin{document}

\title{Powers of componentwise linear ideals}

\author{ J\"urgen Herzog, Takayuki Hibi and Hidefumi Ohsugi}
\thanks{This paper was written while the  first author was staying at Osaka University}
\subjclass{Primary: 13A30 Secondary: 13D45}

\address{J\"urgen Herzog, Fachbereich Mathematik, Universit\"at Duisburg-Essen, Campus Essen, 45117
Essen, Germany} \email{juergen.herzog@uni-essen.de}

\address{Takayuki Hibi, Department of Mathematics, Graduate School of Science,
Osaka University, Toyonaka, Osaka 560-0043, Japan}
\email{hibi@math.sci.osaka-u.ac.jp}

\address{Hidefumi Ohsugi, Department of Mathematics, College of Science, Rikkyo University,
Tokyo 171-8501, JAPAN}
\email{ohsugi@rkmath.rikkyo.ac.jp}
\dedicatory{Dedicated to the memory of Professor Masayoshi Nagata}

\begin{abstract}
We give criteria for graded ideals to have the property that all their powers are  componentwise linear.
Typical examples to which our criteria can be applied
include the vertex cover ideals of certain finite graphs.
\end{abstract}

\maketitle
\section*{Introduction}
Let $K$ be a field, let $S=K[x_1,\ldots, x_n]$ the polynomial
ring in $n$ variables over $K$, and let $\mm=(x_1,\ldots,x_n)$ be its graded maximal ideal. Let $I\subset S$ be a graded ideal. The ideal $I$ is said to be {\em componentwise linear}, if for all $j$ the  ideal $I_{\langle j\rangle}=(I_j)$, generated by the $j$th component of $I$, has a linear resolution. It is known that $I$ is componentwise linear if $(I_{[j]}$ has a linear resolution for all $j\leq \reg I$,
see \cite{HeHi}.
In particular, ideals with linear resolution are componentwise linear. Typical examples of componentwise linear ideals are stable and squarefree stable ideals.

Naively one would expect that for an ideal with linear resolution, all its powers have a linear resolution as well. But this is not the case. A first counterexample was given by Terai who observed that,  if the characteristic of $K$ is zero, then the Stanley--Reisner ideal of the natural triangulation of the real projective plane has a linear resolution, but the square does not. For ideals with linear resolution there exist criteria, among them the so-called $x$-condition
\cite{HHZ},
that allow to test whether all of its powers have a linear resolution as well.  For example it  was shown
\cite{HHZ},
by using the $x$-condition,   that all powers of a  monomial ideal with linear resolution generated in degree 2  have linear resolutions. But this condition  fails in many other cases, for example for the ideal of $2$-minors of a generic symmetric $3\times 3$-matrix which as we shall see has the property that all of its powers have a linear  resolution. To prove this we shall use the following criterion (Corollary~\ref{obvious}) which is one of the main results of the paper. Let $K$ be an infinite field and $I\subset S$ a graded ideal. Then all powers of $I$  are componentwise linear if and only if a generic sequence of linear forms generating the $K$-vector space $S_1$ is a $d$-sequence with respect to the Rees ring $R(I)$ of $I$. Proposition \ref{conclusion} provides a Gr\"obner basis condition  on the defining ideal $R(I)$ that guarantees that a given $K$-basis of $S_1$ is a $d$-sequence with respect to $R(I)$.  In \cite[Proposition 3.1]{Ro} R\"omer presents a  result related to our main theorem, characterizing standard bigraded $K$-algebras with $x$-regularity $0$.

Our criterion can be easily checked by any computer algebra system for graded ideals whose number of generators is not too big. Interesting  examples  are  the ideals defining rational normal scrolls.  In all cases we checked it turned out that all powers of these ideals  have a linear resolution. Due to these computations and due to other known cases \cite{ABW} and \cite{CHV}, one may expect  that all powers of ideals defining rational normal scrolls have a linear resolution.  One even may expect this property holds true for any graded ideal $I$ for which  $S/I$ is Cohen-Macaulay with  minimal multiplicity. However Conca \cite{C} gave an example of a graded  ideal $I$ whose residue class ring is Cohen--Macaulay with minimal multiplicity, but for which $I^2$ does not have a linear resolution.  The best one could hope is that any {\em reduced} graded ideal $I\subset S$ for which $S/I$ is Cohen--Macaulay with  minimal multiplicity has the property that all its powers have a linear resolution.

In the second section of this paper we consider vertex cover ideals of chordal graphs. The vertex cover ideal of a graph $G$ is the Alexander dual of the edge ideal of $G$. In \cite{HHZ1} it was shown that the edge ideal of a chordal graph is Cohen--Macaulay if and only if it is unmixed. This result indicated that edge  ideals of chordal graphs might be sequentially Cohen--Macaulay, and this was indeed shown by Francisco and Van Tuyl \cite[Theorem 3.2]{FT}. According to  \cite[Theorem 2.1]{HeHi}, this result is equivalent to the statement that the vertex cover ideal of a chordal graph is componentwise linear. In this paper we show  that all powers of the vertex cover ideal  of a  star graph  are componentwise linear (Theorem \ref{stargraph}), and that those of a Cohen--Macaulay chordal graph have linear resolutions (Theorem \ref{oberwolfach}). Star graphs are a special class of chordal graphs. By these results and computational evidence we are lead  to conjecture that all powers of the vertex cover ideal of a chordal graph are componentwise linear.

\section{The criterion}

The criterion we are going to prove is based on results on approximation complexes in \cite{HSV} and on a result of R\"omer \cite{Roe} and Yanagawa \cite[Proposition 4.9]{Ya}, which they proved independently. This result  appeared first in  R\"omer's dissertation. A new proof of their  result in a more general frame is given in \cite[Theorem 5.6]{IR}.

Let $M$ be a finitely generated graded  $S$-module with graded minimal free resolution $(G,\varphi)$. Replacing all entries which are not linear by zero in the matrices describing the differentials $\varphi_i$ in $G$, one obtains a complex, which is called the {\em linear part} of $G$.  The linear part of $G$  can be described more naturally as follows: we define a filtration $\mathcal F$ on $G$ by setting
${\mathcal F}_jG_i=\mm^{j-i}G_i$ for all $i$ and $j$, and denote by $\gr_\mm(G)$ the associated graded complex. This complex is isomorphic to  the linear part of $G$.

The module $M$ is called {\em componentwise linear}, if for all $j$ the submodules generated by $M_j$ have a linear resolution.

\begin{Theorem}[R\"omer, Yanagawa]
\label{lpd}
Let $M$ be a finitely generated graded $S$-module with minimal graded free resolution $G$. The following conditions are equivalent:
\begin{enumerate}
\item[{\em (a)}] $M$ is componentwise linear.
\item[{\em (b)}] $\gr_\mm(G)$ is acyclic.
\end{enumerate}
If the equivalent conditions hold, $H_0(\gr_\mm(G))\iso \gr_\mm(M)$.
\end{Theorem}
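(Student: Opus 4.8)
The plan is to reduce the statement about componentwise linearity to the acyclicity of the associated graded complex $\gr_\mm(G)$ of the minimal free resolution, exploiting the well-known fact that $\lpd$ (the linear part of a resolution) behaves well under short exact sequences and that componentwise linearity is detected degree by degree. First I would recall the characterization: $M$ is componentwise linear if and only if $M_{\langle j\rangle}$ has a linear resolution for all $j$, equivalently (by the result of Herzog--Hibi cited in the introduction) for all $j \le \reg M$. The strategy is to translate ``linear resolution'' into ``the linear part of the minimal free resolution is the whole resolution,'' i.e. $\gr_\mm(G)$ is acyclic, and then to pass from the pieces $M_{\langle j\rangle}$ to $M$ itself.

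For the implication (a) $\Rightarrow$ (b), I would use the exact sequences $0 \to M_{\langle j+1\rangle} \to M_{\langle j\rangle} \to (M_{\langle j\rangle}/M_{\langle j+1\rangle}) \to 0$, where the quotient is concentrated in degree $j$, hence a direct sum of copies of $K$ shifted by $-j$, whose minimal free resolution is linear (a Koszul complex). Since all three modules have purely linear resolutions in the appropriate shifted sense, the mapping cone / horseshoe argument shows that the minimal free resolution of $M_{\langle j\rangle}$ is ``glued'' from linear pieces, and one reads off that the linear part of $G$ computes the homology correctly. Conversely, for (b) $\Rightarrow$ (a), acyclicity of $\gr_\mm(G)$ means that the resolution of $M$ decomposes, via the filtration $\mathcal F$, into strands each of which resolves a component, forcing each $M_{\langle j\rangle}$ to have a linear resolution. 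I would carry out the bookkeeping on graded Betti numbers: $\beta_{i,j}(M) \ne 0$ only for $j$ in a controlled range, and the linear part captures exactly the entries $\beta_{i,i+k}$ for each fixed $k$, so acyclicity amounts to the vanishing of ``non-linear'' syzygies bridging distinct strands.

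The final isomorphism $H_0(\gr_\mm(G)) \iso \gr_\mm(M)$ I would obtain essentially for free once (b) holds: $\gr_\mm(G)$ is then a (graded) free resolution of its own $H_0$, and since $\gr_\mm(-)$ is right exact (it is a quotient construction on the filtered complex), applying it to $G_1 \to G_0 \to M \to 0$ gives $\gr_\mm(G_1) \to \gr_\mm(G_0) \to \gr_\mm(M) \to 0$, whence $H_0(\gr_\mm(G)) = \Coker(\gr_\mm(\varphi_1)) \iso \gr_\mm(M)$.

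The main obstacle I anticipate is the precise homological algebra of the filtration $\mathcal F$: one must check that the induced differentials on $\gr_\mm(G)$ are exactly the ``linear part'' matrices (this is the naturality claim made just before the theorem, so I may assume it), and then control the spectral sequence of the filtered complex $G$ — its $E_0$ page is $\gr_\mm(G)$, its abutment is $H_*(G) = M$ concentrated in homological degree $0$, and one needs that degeneration at $E_1$ (i.e. acyclicity of $\gr_\mm(G)$ in positive homological degree) is equivalent to componentwise linearity. Keeping track of the two gradings simultaneously — the homological degree and the internal degree, with the filtration shifting internal degree against homological degree — is where the argument is most delicate, and I would lean on the cited references \cite{Roe}, \cite{Ya}, \cite{IR} for the technical core rather than reprove the spectral sequence machinery from scratch.
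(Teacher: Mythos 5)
This statement is quoted in the paper as a known theorem of R\"omer and Yanagawa; the paper gives no proof of it at all, but refers to \cite{Roe}, \cite{Ya} and \cite{IR}, so there is no internal argument to compare yours against. Judged on its own terms, your sketch does not yet constitute a proof, and two of its concrete steps fail. First, the short exact sequence $0\to M_{\langle j+1\rangle}\to M_{\langle j\rangle}\to M_{\langle j\rangle}/M_{\langle j+1\rangle}\to 0$ does not exist: $M_{\langle j+1\rangle}$ is generated by the full component $M_{j+1}$, which is in general strictly larger than $S_1M_j=(M_{\langle j\rangle})_{j+1}$, so $M_{\langle j+1\rangle}$ is not a submodule of $M_{\langle j\rangle}$ --- already for the stable (hence componentwise linear) ideal $I=(x_1,x_2^2)$ one has $x_2^2\in I_{\langle 2\rangle}\setminus I_{\langle 1\rangle}$. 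The exact sequences actually used in the literature are of the shape $0\to\mm M_{\langle j\rangle}\to M_{\langle j\rangle}\to Q\to 0$ together with the comparison of $\mm M_{\langle j\rangle}$ with $M_{\langle j+1\rangle}$, and setting this up correctly is a real part of the work, not bookkeeping. Second, the claim that $H_0(\gr_\mm(G))\iso\gr_\mm(M)$ comes ``for free'' because $\gr_\mm(-)$ is right exact is false: applying the associated graded construction to $G_1\to G_0\to M\to 0$ with the filtration $\mathcal F_jG_i=\mm^{j-i}G_i$ only yields a natural surjection $H_0(\gr_\mm(G))\to\gr_\mm(M)$, and exactness in the middle requires the strictness condition $\varphi_1(G_1)\cap\mm^jG_0\subseteq \varphi_1(\mm^{j-1}G_1)+\mm^{j+1}G_0$, which fails already for $M=S/(x^2)$ (there the linear part has zero differential, so $H_0(\gr_\mm(G))=\gr_\mm(G_0)\not\iso\gr_\mm(M)$). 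That the surjection is an isomorphism is precisely part of the conclusion under hypothesis (b), and your argument never invokes (b) at this point.

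Beyond these two errors, the implication (b)$\implies$(a) is asserted rather than proved: saying that acyclicity of $\gr_\mm(G)$ ``forces the resolution to decompose into strands resolving the components'' restates the theorem, and graded Betti numbers alone cannot detect acyclicity of the linear part, since that is a statement about the maps in the minimal resolution, not about its ranks. You also explicitly defer the spectral-sequence core to \cite{Roe}, \cite{Ya}, \cite{IR}; deferring is legitimate (it is what the paper itself does), but then the proposal should be presented as a citation rather than as a proof, and the incorrect intermediate steps above should be removed, since as written they would not survive being made precise.
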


The link to approximation complexes is given by the next theorem. As usual, the  $\mathcal M$-complex of $M$ with respect to $\mm$ is denoted by ${\mathcal M}(\mm;M)$. It is a linear complex of free $S$-modules. In other words, all entries of the matrices describing the differentials of this complex are linear forms. We refer the reader to \cite{HSV1} for a detailed description of approximation complexes. The following result (\cite[Theorem 5.1]{HSV}) provides another interpretation of the linear part of a resolution.

\begin{Theorem}[Herzog, Simis, Vasconcelos]
\label{hsv1}
Let $M$ be a finitely generated graded $S$-module with minimal graded free resolution $G$. Then
\[
\gr_\mm(G)\iso {\mathcal M}(\mm;M).
\]
\end{Theorem}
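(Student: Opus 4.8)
The plan is to write down an explicit isomorphism of complexes $\Psi\colon\gr_\mm(G)\to\mathcal M(\mm;M)$ and verify that it commutes with the differentials. Recall that the $i$-th term of $\mathcal M(\mm;M)$ is the free $\gr_\mm(S)$-module $H_i(\mm;M)\otimes_K\gr_\mm(S)$, where $H_i(\mm;M)$ denotes the Koszul homology of $x_1,\dots,x_n$ on $M$ (a $K$-vector space, being annihilated by $\mm$); its differential is the one induced on Koszul homology by the ``contraction'' differential $e_j\mapsto T_j$ on the $\mathcal Z$-complex, so that on classes it sends the class of a Koszul $1$-cycle $z=\sum_ja_je_j$ (where $\sum_jx_ja_j=0$) to the class of $\sum_j\overline{a_j}\otimes T_j$, with $T_j\in\gr_\mm(S)$ the image of $x_j$ in $\mm/\mm^2$. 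On the other side, minimality gives $\varphi_i(G_i)\subseteq\mm G_{i-1}$, so each $\varphi_i$ is compatible with the filtration $\mathcal F_jG_i=\mm^{j-i}G_i$, and $\gr_\mm(\varphi_i)$ is the matrix obtained from $\varphi_i$ by deleting all entries of degree $\geq2$; moreover $\gr_\mm(G)_i=\gr_\mm(S)\otimes_K(G_i/\mm G_i)$ is free over $\gr_\mm(S)$, with its generators in $\mm$-adic degree $i$. So both complexes are, degreewise, free $\gr_\mm(S)$-modules of the same rank $\dim_K(G_i/\mm G_i)=\dim_KH_i(\mm;M)$; the content lies in matching them up compatibly with the differentials.

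First I would identify the terms. From $0\to M_1\to G_0\to M\to0$ with $M_1=\Syz_1(M)$, and using that $x_1,\dots,x_n$ is a regular sequence on the free module $G_0$ (so $H_i(\mm;G_0)=0$ for $i>0$), the long exact sequence of Koszul homology gives connecting isomorphisms $H_i(\mm;M)\iso H_{i-1}(\mm;M_1)$ for $i\geq2$ and $H_1(\mm;M)\iso M_1/\mm M_1$ (surjectivity of the latter uses that $G_0/\mm G_0\to M/\mm M$ is an isomorphism). Iterating through the higher syzygies $M_j=\Syz_j(M)$ produces a natural graded isomorphism $H_i(\mm;M)\iso M_i/\mm M_i=G_i/\mm G_i$, and tensoring with $\gr_\mm(S)$ gives isomorphisms $\Psi_i\colon\gr_\mm(G)_i\iso\mathcal M(\mm;M)_i$ (with $\Psi_0$ induced by $G_0/\mm G_0\iso M/\mm M=H_0(\mm;M)$).

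Next, the commuting squares. The key observation is that everything is compatible with passing to the first syzygy: the truncation $\cdots\to G_2\to G_1\to M_1\to0$ is the minimal free resolution of $M_1$, and under the shift $j\mapsto j+1$ of the filtration this identifies $\gr_\mm(G)$ in homological degrees $\geq1$ with $\gr_\mm$ of the resolution of $M_1$; likewise the connecting isomorphisms identify $\mathcal M(\mm;M)$ in homological degrees $\geq1$ with $\mathcal M(\mm;M_1)$, since the contraction differential is induced uniformly from the short exact sequence of Koszul complexes coming from $0\to M_1\to G_0\to M\to0$ and therefore commutes with its connecting maps. Iterating this reduction, it suffices to check the single square built from $\gr_\mm(\varphi_1)$, the differential $\mathcal M(\mm;M)_1\to\mathcal M(\mm;M)_0$, and $\Psi_0,\Psi_1$. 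That is a direct computation: choose a basis element $g\in G_1$ corresponding via $\Psi_1$ to the class of a Koszul $1$-cycle $z=\sum_ja_je_j$; unwinding the connecting homomorphism, $\varphi_1(g)=\sum_jx_j\tilde a_j$ for any lifts $\tilde a_j\in G_0$ of the $a_j$ along $G_0\to M$. Reducing modulo $\mm^2G_0$ gives $\gr_\mm(\varphi_1)(\overline g)=\sum_jT_j\otimes\overline{a_j}$ in $(\mm/\mm^2)\otimes_K(G_0/\mm G_0)$, and via $\Psi_0$ this equals $\sum_j\overline{a_j}\otimes T_j$, which is precisely the value of the differential of $\mathcal M(\mm;M)$ on $\Psi_1(\overline g)$. (A global sign may have to be absorbed into the $\Psi_i$, as always with connecting maps.) Hence $\Psi$ is an isomorphism of complexes.

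The main obstacle is organisational rather than conceptual: one must set up the iterated connecting isomorphisms together with the filtration shifts for $M,M_1,M_2,\dots$ so that they are compatible in all homological degrees and with both the internal and the $\mm$-adic gradings — only then is the reduction to the single bottom square legitimate. Once that bookkeeping is in place, the Koszul-cycle computation above closes the argument.
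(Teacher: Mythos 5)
The paper itself gives no proof of this statement: Theorem~\ref{hsv1} is quoted verbatim from Herzog--Simis--Vasconcelos \cite[Theorem 5.1]{HSV}, so there is no internal argument to compare yours against; what you have written is a self-contained reconstruction, and in outline it is correct. Your identification of the terms, $H_i(\mm;M)\iso G_i/\mm G_i$, could be obtained in one stroke as $\Tor_i^S(K,M)$ computed from either the Koszul resolution of $K$ or from $G$ (using minimality), but building it instead through the iterated connecting isomorphisms $H_i(\mm;M)\iso H_{i-1}(\mm;\Syz_1(M))$ is the right choice here, because it is exactly what lets you shift both complexes simultaneously to the first syzygy and reduce the compatibility of differentials to the single square in homological degrees $(1,0)$; the naturality you invoke (the downgrading map $e_j\mapsto T_j$ is defined on the Koszul complex with extended coefficients, hence is a morphism of the short exact sequence of Koszul complexes and commutes, up to sign, with the connecting maps) is the genuine content of that reduction and is correctly identified as such. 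Two small points of care: in the bottom-square computation the equality $\varphi_1(g)=\sum_j x_j\tilde a_j$ only holds modulo $\mm M_1\subseteq\mm^2G_0$ (the connecting map determines $\sum_j x_j\tilde a_j$ only up to $\mm M_1$, and the cycle representing $\Psi_1(\overline g)$ only up to boundaries), but since the comparison takes place in $\mm G_0/\mm^2G_0$ this is harmless and should just be said explicitly; and the filtration shift $j\mapsto j+1$ on the $\gr_\mm(G)$ side must be matched against the twist $(-i)$ versus $(-(i-1))$ in the definition of ${\mathcal M}(\mm;-)$, which is part of the bookkeeping you already flag. With those details written out, your argument proves the theorem; it is, as far as one can tell from this paper, an independent route rather than a paraphrase of the cited original.
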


In view of Theorem~\ref{lpd} and Theorem~\ref{hsv1} we would like to know when the $\mathcal M$-complex is acyclic. An answer to this question is given by the next result \cite[Theorem 4.1]{HSV}.

\begin{Theorem}[Herzog, Simis, Vasconcelos]
\label{hsv2}
Let $M$ be a finitely generated graded $S$-module and $I$  a proper,  graded ideal. Then the following conditions are equivalent:
\begin{enumerate}
\item[{\em (a)}] ${\mathcal M}(I;M)$ is acyclic.
\item[{\em (b)}] $I$ is generated by a $d$-sequence with respect to $M$.
\end{enumerate}
\end{Theorem}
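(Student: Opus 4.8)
The plan is to construct the approximation complexes by hand and then argue by induction on the number $r$ of elements in a fixed generating set $f_1,\dots,f_r$ of $I$. Let $K_\bullet=K_\bullet(f_1,\dots,f_r;M)$ be the Koszul complex, with cycles $Z_\bullet$, boundaries $B_\bullet$ and homology $H_\bullet=H_\bullet(f_1,\dots,f_r;M)$, and put $R=S[T_1,\dots,T_r]$. On $K_\bullet\otimes_S R$ there are two anticommuting differentials, the Koszul differential of $f_1,\dots,f_r$ and that of $T_1,\dots,T_r$; the second one therefore induces differentials on $Z_\bullet\otimes_S R$, on $B_\bullet\otimes_S R$ and on $H_\bullet\otimes_S R$, giving complexes $\mathcal Z$, $\mathcal B$ and $\mathcal M={\mathcal M}(I;M)$ which fit into the short exact sequence of complexes $0\to\mathcal B\to\mathcal Z\to\mathcal M\to 0$. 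Since the $T$-differential lowers homological degree by one while raising $T$-degree by one, each of these complexes is the direct sum of its finite subcomplexes of constant ``$T$-plus-homological degree'', so acyclicity of $\mathcal M$ may be tested one strand at a time.

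It is instructive to record the case $r=1$, $I=(f)$ first: in positive homological degrees ${\mathcal M}(I;M)$ is the single map $(0:_M f)\otimes_S S[T]\xrightarrow{\,\cdot T\,}(M/fM)\otimes_S S[T]$, which is injective precisely when the natural map $0:_M f\to M/fM$ is, i.e.\ when $(0:_M f)\cap fM=0$; an elementary colon manipulation turns this into the equality $0:_M f^2=0:_M f$, the $d$-sequence condition for one element. This already displays the mechanism: the vanishing of a chunk of $\mathcal M$-homology is exactly a colon identity.

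For the inductive step I would split off the first generator, using the iterated-Koszul short exact sequence
\[
0\to K_\bullet(f_2,\dots,f_r;M)\to K_\bullet(f_1,\dots,f_r;M)\to K_\bullet(f_2,\dots,f_r;M)[-1]\to 0 .
\]
This produces a long exact homology sequence whose connecting maps on $H_\bullet(f_2,\dots,f_r;M)$ are, up to sign, multiplication by $f_1$; tensoring the corresponding mapping-cone description with $R$ exhibits ${\mathcal M}(I;M)$ as a cone built from ${\mathcal M}\bigl((f_2,\dots,f_r);M\bigr)$ together with an analogous complex attached to the colon module $(f_2,\dots,f_r)M:_M f_1$, with the new variable $T_1$ supplying the extra coordinate. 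Using the standard reduction that $f_1,\dots,f_r$ is a $d$-sequence on $M$ if and only if $f_1$ is a $d$-sequence element on $M$ and $\bar f_2,\dots,\bar f_r$ is a $d$-sequence on $M/f_1M$, the implication (b)$\Rightarrow$(a) follows from the induction hypothesis once one verifies that the connecting homomorphisms in the $T$-graded long exact sequence --- which are precisely the multiplication-by-$T_i$ maps occurring inside $\mathcal M$ --- vanish; and this vanishing is forced, strand by strand, exactly by the colon equalities $(f_1,\dots,f_{i-1})M:_M f_if_j=(f_1,\dots,f_{i-1})M:_M f_j$. For (a)$\Rightarrow$(b), acyclicity of $\mathcal M$ is read off one homological degree at a time, starting from the degree-one strand, to recover those same equalities.

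The principal difficulty is the bookkeeping in this last step: one must arrange the mapping cone so that the connecting homomorphisms are literally the differentials appearing in $\mathcal M$, and then match the vanishing of each of them, in each $T$-degree, with exactly one of the order-dependent, non-symmetric colon identities defining a $d$-sequence, keeping track throughout of which ideal $(f_1,\dots,f_{i-1})M:_M f_j$ is in play. Controlling the interplay between the homological grading and the $T$-grading is where the real content lies; the remaining ingredients are just the formalism of approximation complexes and the elementary $r=1$ computation above.
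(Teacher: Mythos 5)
The paper itself offers no proof of Theorem~\ref{hsv2}; it is quoted from Herzog--Simis--Vasconcelos \cite[Theorem 4.1]{HSV}, so your argument has to stand on its own, and as sketched it does not. The decisive problem is your direction (a)$\Rightarrow$(b): you propose to read off, strand by strand, the colon identities for the \emph{given} ordered sequence $f_1,\dots,f_r$. But acyclicity of ${\mathcal M}(I;M)$ cannot encode those order-dependent identities: permuting the generators merely relabels the Koszul basis and the variables $T_i$ and yields an isomorphic complex, while the $d$-sequence property is not permutation-invariant. Concretely, for $I=(x^2,xy)\subset K[x,y]$ and $M=S$ the sequence $xy,x^2$ is a $d$-sequence (so ${\mathcal M}(I;S)$ is acyclic, and it stays acyclic when computed from the reordered generators), yet $x^2,xy$ is not a $d$-sequence, since $(x^2):_S xy\cap I=(x)\cap I=(x^2,xy)\neq(x^2)$. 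Thus (a)$\Rightarrow$(b) asserts only the \emph{existence} of some generating $d$-sequence, and any proof must make a choice of generators; as recalled in the proof of Theorem~\ref{main}, the HSV argument chooses a sequence with the asymptotic Koszul homology vanishing $H_i(z_1,\dots,z_j;M)_\ell=0$ for $\ell\gg 0$ (a genericity-type choice) and only then shows that acyclicity forces this particular sequence to be a $d$-sequence. Your sketch contains no such selection step, so this half of the argument fails as a matter of principle, not bookkeeping.

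The (b)$\Rightarrow$(a) half is also incomplete. Dropping $f_1$ gives, for each $i$, only a short exact sequence $0\to H_i(f')/f_1H_i(f')\to H_i(f)\to \bigl(0:_{H_{i-1}(f')}f_1\bigr)\to 0$ with $f'=(f_2,\dots,f_r)$, and these extensions need not split; so ${\mathcal M}(I;M)$ is not literally a mapping cone of ${\mathcal M}\bigl((f_2,\dots,f_r);M\bigr)$ and a complex on the colon module, and the step you yourself flag as ``bookkeeping'' --- identifying the connecting maps with the $T_1$-component of the differential and proving their vanishing strand by strand from the colon equalities --- is precisely the unproved content of the theorem. Your reduction of the $d$-sequence condition is also slightly off: besides $\bar f_2,\dots,\bar f_r$ being a $d$-sequence on $M/f_1M$, the first condition must be taken relative to the whole ideal, $0:_Mf_1\cap IM=0$, not merely the one-element condition $0:_Mf_1\cap f_1M=0$. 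Only the $r=1$ computation in your proposal is actually proved.
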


The concept of $d$-sequences was introduced by Huneke \cite{Hu}. Recall that a sequence ${\bf z}=z_1,\ldots,z_m$ of elements of $S$ with $I=({\bf z})$ is called a $d$-sequence, if
\[
(z_1,\ldots, z_{i-1})M\:_M z_i\sect IM= (z_1,\ldots,z_{i-1})M\quad \text{for}\quad i=1,\ldots, m.
\]

After these preparations we are ready to state and prove  our main result.

\begin{Theorem}
\label{main}
Let $K$ be an infinite field,  and let $I\subset S=K[x_1,\ldots,x_n]$ be a graded ideal. If there exists a $K$-basis  ${\bf z}=z_1,\ldots,z_n$ of $S_1$ such that ${\bf z}$ is a $d$-sequence with respect to the Rees algebra $R(I)$, then all powers of $I$ are componentwise linear.

Conversely,  let ${\bf z}$ be a generic $K$-basis of $S_1$. If all powers of $I$ are componentwise linear, then ${\bf z}$ is a $d$-sequence with respect to $R(I)$.
\end{Theorem}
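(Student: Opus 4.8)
The plan is to read off all the powers of $I$ from the Rees algebra $R(I)=\Dirsum_{k\geq 0}I^kt^k$, viewed as an $\NN$-graded $S$-algebra whose grading is the $t$-degree $k$, so that its $k$th graded component, carrying its inherited internal $S$-grading, is exactly $I^k$. The basis ${\bf z}$ lies in $t$-degree $0$ and generates $\mm=({\bf z})$. The first step is a \emph{bridge lemma}: because each $z_i$ is homogeneous of $t$-degree $0$, the submodules $(z_1,\dots,z_{i-1})R(I)$, the colon modules $(z_1,\dots,z_{i-1})R(I)\:_{R(I)}z_i$ and the ideal $\mm R(I)$ are all $\ZZ$-graded for the $t$-degree, so the defining equalities of a $d$-sequence decompose degree by degree. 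Hence ${\bf z}$ is a $d$-sequence with respect to the ring $R(I)$ if and only if ${\bf z}$ is a $d$-sequence with respect to the finitely generated $S$-module $I^k$ (with ideal $\mm$) for every $k\geq 0$; the case $k=0$ is automatic, since $n$ linearly independent linear forms form a regular, hence a $d$-, sequence on $S$. This reduces both directions to statements about the finitely generated modules $I^k$, which is the setting of Theorems~\ref{lpd}, \ref{hsv1} and~\ref{hsv2}.

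For the first assertion, assume ${\bf z}$ is a $d$-sequence with respect to $R(I)$. By the bridge lemma, for each $k$ the ideal $\mm$ is generated by a $d$-sequence with respect to $I^k$, so Theorem~\ref{hsv2} (applied to the module $I^k$ and the ideal $\mm$) shows that ${\mathcal M}(\mm;I^k)$ is acyclic. Writing $G^{(k)}$ for the minimal graded free resolution of $I^k$, Theorem~\ref{hsv1} gives $\gr_\mm(G^{(k)})\iso{\mathcal M}(\mm;I^k)$, which is therefore acyclic, and Theorem~\ref{lpd} says precisely that $I^k$ is then componentwise linear. As $k$ was arbitrary, all powers of $I$ are componentwise linear; note that this half uses no genericity of ${\bf z}$.

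For the converse, suppose all powers of $I$ are componentwise linear. Running the same three theorems backwards, ${\mathcal M}(\mm;I^k)$ is acyclic for every $k$ (Theorems~\ref{lpd} and~\ref{hsv1}), so Theorem~\ref{hsv2} gives, for each $k$, \emph{some} $d$-sequence generating $\mm$ with respect to $I^k$. What is needed is one sequence of linear forms serving all $k$ at once, and this is where genericity enters. I would pass to the purely transcendental extension $L=K(a_{ij}\mid 1\leq i,j\leq n)$ and set $z_i=\sum_j a_{ij}x_j\in S\tensor_K L$. Then $(a_{ij})$ is invertible over $L$, so ${\bf z}$ is an $L$-basis of $(S\tensor_K L)_1$; moreover $I^k\tensor_K L=(I\tensor_K L)^k$ and $R(I)\tensor_K L=R(I\tensor_K L)$, and the $\mathcal M$-complex commutes with this flat base change, so ${\mathcal M}(\mm L;(I\tensor_K L)^k)$ is still acyclic. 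By Theorem~\ref{hsv2} over $L$ and the standard fact that once the $\mathcal M$-complex is acyclic a generic minimal generating sequence is a $d$-sequence (part of the approximation-complex theory of \cite{HSV}), the algebraic independence of the $a_{ij}$ over $K$ forces ${\bf z}$ itself to be a $d$-sequence with respect to $(I\tensor_K L)^k$ — the same ${\bf z}$ for every $k$. By the bridge lemma over $L$, ${\bf z}$ is then a $d$-sequence with respect to $R(I)\tensor_K L=R(I\tensor_K L)$, which is the asserted conclusion for a generic basis (a genuinely $K$-rational generic basis works as well when $K$ is large enough, e.g.\ uncountable, and in general one may instead argue with the approximation complex of the Noetherian ring $R(I)$ directly).

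The step I expect to be the obstacle is exactly this last one. Theorem~\ref{hsv2} returns, for each individual power $I^k$, only \emph{some} $d$-sequence generating $\mm$, with no coherence built in as $k$ varies, and since this is one condition per power one cannot naively intersect infinitely many of them over a small infinite field; it is the passage through the Rees algebra — packaging all powers together and exploiting genuinely transcendental coefficients, or, equivalently, invoking the generic-$d$-sequence principle once — that produces a uniform choice, with the bridge lemma carrying the finitely-generated-module criteria of Theorems~\ref{lpd}, \ref{hsv1} and~\ref{hsv2} over to the ring $R(I)$, which is not finitely generated over $S$. Pinning down the generic-$d$-sequence principle and the precise meaning of ``generic $K$-basis'' in this $\NN$-graded situation is the part of the argument that demands the most care.
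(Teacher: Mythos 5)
Your forward direction coincides with the paper's: the observation that the $z_i$ have $t$-degree $0$, so the $d$-sequence equalities on $R(I)$ decompose into the equalities $(z_1,\ldots,z_{i-1})I^k\: z_i\sect \mm I^k=(z_1,\ldots,z_{i-1})I^k$ for every $k$, followed by Theorems~\ref{hsv2}, \ref{hsv1} and \ref{lpd}, is exactly the published argument, and you are right that no genericity is needed there.

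The converse, however, has a genuine gap, and it sits precisely at the point you yourself flag. Passing to $L=K(a_{ij})$ and taking $z_i=\sum_j a_{ij}x_j$ does let a single basis satisfy simultaneously the countably many conditions (one for each power $I^k$) extracted from the proof of \cite[Theorem 4.1]{HSV}, but this proves the statement only for that transcendental basis over $L$ (equivalently, for a ``very generic'' basis, which is available $K$-rationally only when $K$ is large, e.g.\ uncountable). The theorem as stated concerns a generic $K$-basis over an arbitrary infinite field, say $\QQ$ or $\overline{\FF}_p$, where a countable intersection of nonempty open sets defined over $K$ need not contain a $K$-rational point, and neither your base change nor the closing remark that ``one may instead argue with the approximation complex of the Noetherian ring $R(I)$ directly'' supplies the missing uniformity or a descent from $L$ to $K$ (colon ideals and $d$-sequence conditions do not specialize for free, and specializing would again produce one bad closed set per $k$). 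The paper closes exactly this gap by two steps you do not have: (i) it only requires the quotients $((z_1,\ldots,z_j)\gr_\mm(I^k):z_{j+1})/(z_1,\ldots,z_j)\gr_\mm(I^k)$ to have finite length, and uses $\reg(\gr_\mm(I^k))=0$ (the acyclic $\mathcal M$-complex is a linear resolution of $\gr_\mm(I^k)$) together with \cite[Proposition 20.20]{Ei} to upgrade finite length to vanishing in all positive degrees, hence to the Koszul homology vanishing $H_i(z_1,\ldots,z_j;\gr_\mm(I^k))_\ell=0$ for $\ell>0$ demanded by the HSV criterion; and (ii), crucially, it applies the graded version of Brodmann's theorem \cite[Proposition 2.5]{Br} to the finitely generated graded $S/(z_1,\ldots,z_j)$-algebra $\Dirsum_{k\geq 0}\gr_\mm(I^k)/(z_1,\ldots,z_j)\gr_\mm(I^k)$ to conclude that $\Union_{k\geq 0}\Ass\bigl(\gr_\mm(I^k)/(z_1,\ldots,z_j)\gr_\mm(I^k)\bigr)$ is a finite set. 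Consequently, at each step of building $\bf z$ one has to avoid only finitely many primes other than the maximal one --- a single nonempty open condition per step, realizable by a $K$-rational linear form over any infinite field. That finiteness-of-associated-primes argument, packaging all powers $k$ at once into one Noetherian object, is the key idea your proposal is missing.
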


\begin{proof}
Let ${\bf z}=z_1,\ldots,z_n$ be a $K$-basis of $S_1$ which  is a $d$-sequence with respect to the Rees algebra $R(I)$. Then
\begin{eqnarray}
\label{dsequence}
(z_1,\ldots,z_{i-1})R(I)\: z_i\sect \mm R(I)= (z_1,\ldots,z_{i-1})R(I).
\end{eqnarray}

The Rees ring $R(I)$ is naturally $\ZZ$-graded with $R(I)_k=I^kt^k$ for all ´$k$. Since the elements $z_i$ in this grading are of degree $0$,  equality (\ref{dsequence}) yields the  equalities
\[
(z_1,\ldots,z_{i-1})I^k\: z_i\sect \mm I^k= (z_1,\ldots,z_{i-1})I^k,\quad k=0,1,2,\ldots.
\]
In other words, ${\bf z}$ is a $d$-sequence with respect to $I^k$ for all $k$. Therefore Theorem~\ref{hsv2} implies that the approximation complexes $\mathcal{M}(\mm; I^k)$ are acyclic. Let $G^{(k)}$ be the graded minimal free $S$-resolution of $I^k$. It follows from Theorem~\ref{hsv1} that the associated graded complex $\gr_\mm(G^{(k)})$ is acyclic for all $k$. According Theorem~\ref{lpd}   this implies that $I^k$  is componentwise linear.

Conversely assume that all powers of $I$ are componentwise linear. Then Theorem~\ref{lpd}  and Theorem~\ref{hsv1}   imply that all the approximation complexes  $\mathcal{M}(\mm; I^k)$ are acyclic. Hence  Theorem~\ref{hsv2} asserts that for each $k$ there exists  a $K$-basis of $S_1$ which is a $d$-sequence  $d$-sequence with respect to $I^k$. The proof of \cite[Theorem 4.1]{HSV} shows how the $d$-sequence is constructed: choose any $K$-basis  ${\bf z}=z_1,\ldots,z_n$ of $S_1$ with the property that for all $i>0$ and all $j\in\{1,\ldots,n\}$ we have $H_i(z_1,\ldots,z_j;\gr_\mm(I^k))_\ell=0$ for all $\ell\gg 0$. Then under the assumption that $\mathcal{M}(\mm; I^k)$ is acyclic, this sequence is a $d$-sequence with respect to $I^k$.

Suppose we can choose a $K$-basis  $\bf z$ of $S_1$ such that
\begin{eqnarray}
\label{one}
((z_1,\ldots,z_j)\gr_\mm(I^k):z_{j+1})/(z_1,\ldots,z_j)\gr_\mm(I^k)
 \end{eqnarray}
has finite length for all $k$ and all $j$, then, since $\reg(\gr_\mm(I^k))=0$, \cite[Proposition 20.20]{Ei} implies that
\begin{eqnarray}
\label{all}
(((z_1,\ldots,z_j)\gr_\mm(I^k):z_{j+1})/(z_1,\ldots,z_j)\gr_\mm(I^k))_\ell=0 \quad \text{for all} \quad \ell>0.
\end{eqnarray}
Observe that the regularity of $\gr_\mm(I^k)$ is indeed zero, because  $\gr_\mm(I^k)$ is generated in degree $0$ and  the approximation complex $\mathcal{M}(\mm; I^k)$ provides a linear resolution of $\gr_\mm(I^k)$.

For  a sequence satisfying (\ref{all}) for all $k$ it follows that $H_i(z_1,\ldots,z_j;\gr_\mm(I^k))_\ell=0$ for all $i>0$, all $k$  and $\ell>0$, and so  $\bf z$ is a $d$-sequence with respect to all $I^k$, as explained before. This is equivalent to say that $\bf z$ is a $d$-sequence with respect to $R(I)$. But how can we find  a sequence $\bf z$ such that property (\ref{one}) (and consequently property (\ref{all})) is satisfied for all powers of $I$?

Suppose we have already chosen $z_1,\ldots,z_j$ satisfying (\ref{one}). We apply  the graded version of \cite[Proposition 2.5]{Br}   to the standard graded $S/(z_1,\ldots,z_j)$-algebra $$A=\Dirsum_{k\geq 0}\gr_\mm(I^k)(z_1,\ldots,z_j)\gr_\mm(I^k)$$ to conclude that
\[
\mathcal{A}=\Union_{k\geq 0}\Ass(\gr_\mm(I^k)(z_1,\ldots,z_j)\gr_\mm(I^k))
\]
is a finite set. Hence since $K$ is infinite we may choose a linear form  $z_{j+1}\in S/(z_1,\ldots,z_j)$ which is contained in no prime ideal of $\mathcal{A}$ different from the graded maximal ideal of  $S/(z_1,\ldots,z_j)$. This is the desired next linear form in our sequence.
Hence in each step of the constructing of $\bf z$ we have an open choice. In other words, generic sequences will be $d$-sequences with respect to $R(I)$.
\end{proof}

\begin{Corollary}
\label{obvious}
Let $K$ be an infinite field,  $I\subset S$  a graded ideal and  ${\bf z}$ a generic $K$-basis of $S_1$. The following conditions are equivalent:
\begin{enumerate}
\item[{\em (a)}] All powers of $I$ are componentwise linear;
\item[{\em (b)}] $\bf z$ is a $d$-sequence with respect to $R(I)$.
\end{enumerate}
\end{Corollary}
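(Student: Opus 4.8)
The plan is to read the corollary off directly from Theorem~\ref{main}, which already contains both implications, albeit stated asymmetrically. For the implication (b) $\Rightarrow$ (a) I would argue as follows: a generic $K$-basis of $S_1$ is, in particular, a $K$-basis of $S_1$, so if such a basis $\bf z$ is a $d$-sequence with respect to $R(I)$, then in particular there \emph{exists} a $K$-basis of $S_1$ that is a $d$-sequence with respect to $R(I)$, and the first assertion of Theorem~\ref{main} immediately yields that all powers of $I$ are componentwise linear.

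The reverse implication (a) $\Rightarrow$ (b) is nothing but the second assertion of Theorem~\ref{main} applied verbatim. The one point worth making explicit is that ``generic'' in the statement of the corollary is to be understood exactly as in the proof of Theorem~\ref{main}: the sequence $z_1,\dots,z_n$ was constructed one linear form at a time, and at each step the admissible choices form a nonempty Zariski-open subset of the linear forms of $S/(z_1,\dots,z_j)$ --- this used that $K$ is infinite together with the finiteness of the relevant set of associated primes, which came from the graded version of \cite[Proposition 2.5]{Br}. With this common meaning of ``generic'', the two conditions (a) and (b) are precisely the two halves of Theorem~\ref{main}.

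Since the corollary is a formal consequence of the theorem, I do not anticipate a genuine obstacle; the only care needed is the logical bookkeeping, namely matching the existential form of the forward direction of Theorem~\ref{main} and the genericity form of its converse direction with the symmetric equivalence (a) $\iff$ (b) asserted here.
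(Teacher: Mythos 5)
Your proposal is correct and matches the paper's intent exactly: the corollary is stated without proof precisely because it is the formal combination of the two halves of Theorem~\ref{main}, with (b)~$\Rightarrow$~(a) using the existential form of the first assertion and (a)~$\Rightarrow$~(b) being the converse assertion for a generic basis. Your remark about interpreting ``generic'' as in the construction within the proof of Theorem~\ref{main} is the right bookkeeping and is consistent with the paper.
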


Let us analyze what it means that a sequence $z_1,\ldots, z_n$ is a $d$-sequence with respect to $R(I)$.  After a change of coordinates we may assume that our given sequence is the sequence $x_1,\ldots,x_n$. Let $f_1,\ldots,f_m$ a homogeneous system of generators of $I$, and let $T=S[y_1,\ldots,y_m]$ be the polynomial ring over $S$ in the variables $y_1,\ldots,y_m$.  We consider the surjective $S$-algebra  homomorphism $T\to R(I)$ with  $y_j\mapsto f_j$ for $j=1,\ldots,m$. Then $R(I)\iso T/J$ where $J$ is the kernel of the map $T\to R(I)$. Identifying $R(I)$ with $T/J$, we see that $x_1,\ldots,x_n$ is a $d$-sequence with respect to $R(I)$ if and only if
\begin{eqnarray}
\label{dsequence}
\qquad (x_1,\ldots,x_{i-1})+J\: x_i\sect ((x_1,\ldots,x_n)+J)=(x_1,\ldots,x_{i-1})+J \quad \text{for}\quad i=1,\ldots,n.
\end{eqnarray}

This is a condition which can be easily checked by any computer algebra system. For example, if we let  $I$ be the ideal of $2$-minors of the generic symmetric matrix
\[
\left(
\begin{array}{ccc} a & b &  c\\
b & d& e\\
c & e& f
\end{array}
\right ),
\]
one easily finds with CoCoA that $a,b,e,d+f,c,f$ is a $d$-sequence on $R(I)$, and hence all powers of $I$ have a linear resolution.

\medskip
Passing to the initial ideal of $J$ with respect to some term order we can deduce a sufficient condition  for the property that all powers of an ideal are componentwise linear. We denote by $J_i$ the ideal which is the image of $J$ under the canonical epimorphism $T\to T/(x_1,\ldots,x_i)$. Then (\ref{dsequence}) implies that $x_1,\ldots,x_n$ is a $d$-sequence on $R(I)$, if and only if
\begin{eqnarray}
\label{dsequence1}
J_{i-1}\: x_i\sect (x_i,\ldots,x_n)+J_{i-1}=J_{i-1} \quad \text{for}\quad i=1,\ldots,n.
\end{eqnarray}

Let $<$ be a monomial order. We denote by $\ini(I)$ the initial ideal with respect to this order.

\begin{Lemma}
\label{initial}
Suppose  there exists a monomial order $<$ on $T/(x_1,\ldots,x_{i-1})$ such that
\begin{eqnarray}
\label{ini}
\ini(J_{i-1})\: x_i\sect ((x_i,\ldots,x_n)+\ini(J_{i-1}))=\ini(J_{i-1}).
\end{eqnarray}
Then (\ref{dsequence1}) holds for this integer $i$.
\end{Lemma}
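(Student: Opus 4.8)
The plan is to derive (\ref{dsequence1}) from (\ref{ini}) by passing to initial ideals and comparing Hilbert functions. Put $\bar T=T/(x_1,\dots,x_{i-1})$, and inside $\bar T$ write $P=J_{i-1}$, $x=x_i$, $\nn=(x_i,\dots,x_n)$ and $L=(P:x)\cap(\nn+P)$. Since $P\subseteq L$ and both ideals are graded, it suffices to prove $\ini(L)\subseteq\ini(P)$: then $\ini(L)=\ini(P)$, so $L$ and $P$ have the same Hilbert function, whence $L=P$, which is exactly (\ref{dsequence1}).

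To bound $\ini(L)$ I would first invoke the elementary inclusions $\ini(L)\subseteq\ini(P:x)\cap\ini(\nn+P)$ and $\ini(P:x)\subseteq\ini(P):x$ (the latter because $f\in P:x$ gives $\ini(xf)=x\,\ini(f)\in\ini(P)$, $x$ being a variable). By the modular law, hypothesis (\ref{ini}) is equivalent to $\nn\cap(\ini(P):x)=\nn\cap\ini(P)$. Hence any monomial $u\in\ini(L)$ divisible by some $x_j$ with $i\le j\le n$ satisfies $xu\in\ini(P)$ and $u\in\nn$, so $u\in\nn\cap(\ini(P):x)=\nn\cap\ini(P)\subseteq\ini(P)$, and such $u$ are taken care of.

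The remaining, and only delicate, case is a monomial $u\in\ini(L)$ involving only the Rees variables $y_1,\dots,y_m$. Here I would use that $J=\ker(T\to R(I))$ is homogeneous in the Rees grading and $J\cap S=0$, so that $P\subseteq(y_1,\dots,y_m)\bar T$. It follows that $\nn+P$ is the full preimage, under the surjection $\bar T\to\bar T/\nn=K[y_1,\dots,y_m]$, of the image $\overline P$ of $P$; a Hilbert function comparison then gives the identity $\ini(\nn+P)=\nn+\ini(\overline P)\bar T$, where $\ini(\overline P)$ is formed for the restricted order. Thus $u\in\ini(\overline P)$, i.e.\ $u$ is the largest among those terms of some Rees-homogeneous $g\in P$ that involve no $x$-variable; at the same time $u\in\ini(P:x)\subseteq\ini(P):x$, i.e.\ $xu\in\ini(P)$. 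One must now conclude $u\in\ini(P)$. If the leading term of $g$ already equals $u$ this is immediate; otherwise I would reduce $g$ against a Rees-homogeneous Gr\"obner basis of $P$ and keep track of its $x$-free part: each step either leaves that part (hence $u$) unchanged while strictly lowering the leading term (which cannot go on forever), or brings in $x$-variables in a way that, together with $xu\in\ini(P)$ and the hypothesis, forces $u\in\ini(P)$.

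The main obstacle is exactly this last point. The inclusion one would like to use, $\ini(\nn+P)\subseteq\nn+\ini(P)$, is false in general, so the argument must exploit that the offending $x$-free monomials are simultaneously trapped in $\ini(P):x$; it is the interaction of the Rees grading with the colon hypothesis that closes the gap. Alternatively one can route the whole step through the flat Gr\"obner degeneration of $\bar T/P$ to $\bar T/\ini(P)$ over $K[t]$: the module $\bar T[t]/\widetilde P$ is $K[t]$-free, the submodule $(x_i,\dots,x_n)(\bar T[t]/\widetilde P)$ is then $K[t]$-free as well, hence so is its $x$-torsion submodule, and one transports the vanishing of that torsion at $t=0$ (which is (\ref{ini})) to $t=1$ (which is (\ref{dsequence1})) — but the bookkeeping for the transport is essentially the one above.
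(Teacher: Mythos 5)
Your reduction is sound as far as it goes, and its first half runs parallel to the paper's own argument: writing $P=J_{i-1}$, $x=x_i$, $\nn=(x_i,\ldots,x_n)$ and $L=(P:x)\cap(\nn+P)$, the passage to $\ini(L)\subseteq\ini(P)$, the modular-law reformulation of (\ref{ini}) as $\nn\cap(\ini(P):x)=\nn\cap\ini(P)$, the inclusions $\ini(L)\subseteq\ini(P:x)\subseteq\ini(P):x$, and the disposal of the monomials of $\ini(L)$ divisible by some $x_j$ with $j\geq i$ are all correct. (For comparison, the paper argues element-wise: for $f$ in the left-hand side of (\ref{dsequence1}) it deduces $x_i\ini(f)\in\ini(J_{i-1})$, asserts $\ini(f)\in(x_i,\ldots,x_n)+\ini(J_{i-1})$, concludes $\ini(f)\in\ini(J_{i-1})$ from (\ref{ini}), and then subtracts a $g\in J_{i-1}$ with $\ini(g)=\ini(f)$ and inducts on the leading monomial.)

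The proposal, however, is not a proof, because the case you yourself call ``the main obstacle'' --- a monomial $u\in\ini(L)$ involving only $y_1,\ldots,y_m$ --- is never settled. Your identity $\ini(\nn+P)=\nn+\ini(\overline P)$ is true (it holds for any ideal containing the variables $x_i,\ldots,x_n$), but it only restates the problem: for an $x$-free monomial $u$, membership in $\nn+\ini(P)$ is the same as membership in $\ini(P)$, so what you must prove in this case is exactly the inclusion $\ini(\nn+P)\subseteq\nn+\ini(P)$ that you declare false for general ideals and orders. The reduction-bookkeeping sketch (``brings in $x$-variables in a way that, together with $xu\in\ini(P)$ and the hypothesis, forces $u\in\ini(P)$'') contains no actual argument, and the flat-degeneration alternative ends with the admission that its bookkeeping ``is essentially the one above''; neither route is carried out. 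You have in fact put your finger on the one nontrivial step of the paper's own proof --- the displayed inclusion $\ini((x_i,\ldots,x_n)+J_{i-1})\subseteq(x_i,\ldots,x_n)+\ini(J_{i-1})$, which does need justification since the order in the lemma is only assumed to satisfy (\ref{ini}) --- but flagging a subtlety is not resolving it: to have a proof you must either justify that inclusion in the present setting (for instance by exploiting the bigraded structure of $J_{i-1}$ or by restricting to orders, such as reverse lexicographic ones with $x_i,\ldots,x_n$ smallest, for which it is standard) or close the $x$-free case by a genuinely different argument. As submitted, the decisive implication is missing.
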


\begin{proof}
Let $f\in J_{i-1}\: x_i\sect ((x_i,\ldots,x_n)+J_{i-1})$. Then $x_if\in J_{i-1}$, and so $x_i\ini(f)\in \ini(J_{i-1})$. Therefore, $\ini(f)\in \ini(J_{i-1})\: x_i$. On the other hand, since $f\in (x_i,\ldots,x_n)+J_{i-1}$, it follows that $\ini(f)\in\ini((x_1,\ldots,x_i)+J_{i-1})\subset
(x_1,\ldots,x_i)+\ini(J_{i-1})$. Thus our hypothesis implies that $\ini(f)\in \ini(J_{i-1})$. Hence there exists $g\in J_{i-1}$ such that $\ini(g)=\ini(f)$. We may assume that the leading coefficients of $f$ and $g$ are 1, and set $h=f-g$. Then $x_ih=x_if-x_ig\in J_{j-1}$ and $h\in (x_i,\ldots,x_n)+J_{i-1}$. Since $\ini(h)<\ini(f)$, we may assume by induction that $h\in J_{i-1}$. This then implies that $f\in J_{i-1}$, as desired.
\end{proof}

For a monomial ideal $L$ we denote as usual by $G(L)$ the unique minimal set of monomial generators of $L$. We also set $y^b=y_1^{b_1}\cdots y_m^{b_m}$ for any integer vector $b=(b_1,\ldots,b_m)$. Now  the preceding considerations yield

\begin{Proposition}
\label{conclusion}
The sequence $x_1,\ldots,x_n$ is $d$-sequence on $R(I)$, and hence all powers of $I$ are componentwise linear, if for each $i$ there exists monomial order on $T/(x_1,\ldots, x_{i-1})$  such that whenever $x_i$ divides  $u\in G(\ini(J_{i-1}))$, then $u$  the form $u=x_iy^b$  and $x_jy^b\in  \ini(J_{i-1})$ for all $j\geq i$.
\end{Proposition}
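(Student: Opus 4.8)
The plan is to deduce the Proposition from Lemma~\ref{initial} together with the two equivalences recorded in the paragraph preceding it. By Theorem~\ref{main} it suffices to show that the stated hypothesis forces $x_1,\ldots,x_n$ to be a $d$-sequence with respect to $R(I)$; and since this property is equivalent to the identities (\ref{dsequence1}) for $i=1,\ldots,n$, and (\ref{dsequence1}) for a given $i$ follows from (\ref{ini}) by Lemma~\ref{initial}, the entire task reduces, for each fixed $i$, to verifying the single colon identity
\[
\ini(J_{i-1})\:x_i\,\sect\,((x_i,\ldots,x_n)+\ini(J_{i-1}))=\ini(J_{i-1})
\]
in the ring $T/(x_1,\ldots,x_{i-1})$, which we identify with the polynomial ring $K[x_i,\ldots,x_n,y_1,\ldots,y_m]$, where $\ini$ is taken with respect to the monomial order supplied by the hypothesis. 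Put $L=\ini(J_{i-1})$.

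Since $L$, $L\:x_i$ and $(x_i,\ldots,x_n)+L$ are monomial ideals, I would prove the nontrivial inclusion $\subseteq$ on the level of monomials. Let $w$ be a monomial in the left-hand side. From $x_iw\in L$ choose $u\in G(L)$ with $u\mid x_iw$. If $x_i\nmid u$, then comparing $x_i$-exponents gives $u\mid w$, so $w\in L$. If $x_i\mid u$, the hypothesis says $u=x_iy^b$ for some $b\in\NN^m$ with $x_jy^b\in L$ for all $j\geq i$; then $u\mid x_iw$ forces $y^b\mid w$, while $w\in(x_i,\ldots,x_n)+L$ gives either $w\in L$ (done) or $x_j\mid w$ for some $j\geq i$. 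In the latter case, because $x_j$ and $y^b$ involve disjoint sets of variables, $x_j\mid w$ and $y^b\mid w$ combine to $x_jy^b\mid w$, whence $w\in(x_jy^b)\subseteq L$. This establishes (\ref{ini}), hence (\ref{dsequence1}), for every $i$, and Theorem~\ref{main} then yields that all powers of $I$ are componentwise linear.

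There is no deep obstacle here; the content is conceptual rather than computational. The one point requiring care is the case split on whether $x_i$ divides the chosen generator $u$ of $\ini(J_{i-1})$ together with, in the case $x_i\mid u$, the merging of the two divisibilities $x_j\mid w$ and $y^b\mid w$ into $x_jy^b\mid w$, which works precisely because an $x$-variable and a monomial in the $y$-variables share no variable. It is also worth noting that the hypothesis is being used in exactly the form ``$u=x_iy^b$ and $x_jy^b\in\ini(J_{i-1})$ for \emph{all} $j\geq i$'': this uniform statement over all $j\geq i$ is what lets the argument close regardless of which $x_j$ divides $w$.
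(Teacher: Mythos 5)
Your argument is correct and follows essentially the same route as the paper: you reduce, via Lemma~\ref{initial} and the equivalence of (\ref{dsequence}) with (\ref{dsequence1}), to the colon identity (\ref{ini}) for each $i$, and then verify that identity by a monomial computation, finishing with Theorem~\ref{main}. The only difference is cosmetic: the paper isolates this last step as a separate lemma on monomial ideals (computing $\ini(J_{i-1}):x_i$ explicitly from the generators), whereas you verify it inline by case-splitting on whether $x_i$ divides a minimal generator dividing $x_iw$ --- the same underlying argument.
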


In order to prove this proposition we just have to see that  equation (\ref{ini}) holds for all $i$.  This will follow from

\begin{Lemma}
Let $I\subset T$ be a monomial ideal. Then the following conditions are equivalent:
\begin{enumerate}
\item[{\em (a)}] $I:x_1\sect (\mm T +I)=I$.
\item[{\em (b)}] If $x_1$ divides  $u\in G(I)$, then  $u$ is of the form $u=x_1y^b$  and $x_jy^b\in  I$ for all $j=1,\ldots,n$.
\end{enumerate}
\end{Lemma}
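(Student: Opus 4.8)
The plan is to work entirely with monomials. Since $I$ is a monomial ideal, so are $I:x_1$ and $\mm T+I$, and hence so is their intersection; therefore equality in (a) can be tested on monomials. Throughout I will use that a monomial $w\in T$ lies in $\mm T+I$ exactly when $w\in I$ or $x_j\mid w$ for some $j\in\{1,\dots,n\}$.

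To prove (a)$\Rightarrow$(b), take $u\in G(I)$ with $x_1\mid u$ and write $u=x_1v$. Then $v\in I:x_1$ since $x_1v=u\in I$. If $v$ lay in $\mm T$, then $v\in I:x_1\sect(\mm T+I)=I$, which would make $u=x_1v$ a non-minimal generator; hence no $x_j$ divides $v$, so $v=y^b$ is a power product in the $y$'s alone and $u=x_1y^b$. For the last assertion, fix $j\in\{1,\dots,n\}$: from $x_1(x_jy^b)=x_ju\in I$ we get $x_jy^b\in I:x_1$, and since $x_jy^b\in\mm T$, part (a) forces $x_jy^b\in I$.

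For (b)$\Rightarrow$(a) it suffices to show $I:x_1\sect(\mm T+I)\subseteq I$. Let $w$ be a monomial on the left; if $w\in I$ we are done, so assume $w\notin I$, whence $x_j\mid w$ for some $j$. Since $x_1w\in I$, pick $u\in G(I)$ with $u\mid x_1w$. If $x_1\nmid u$, then already $u\mid w$, so $w\in I$. If $x_1\mid u$, then by (b) $u=x_1y^b$ with $x_\ell y^b\in I$ for every $\ell$; comparing $y$-exponents in $u\mid x_1w$ gives $y^b\mid w$, and because $x_j$ and $y^b$ involve disjoint sets of variables and each divides $w$, we get $x_jy^b\mid w$; but $x_jy^b\in I$, so $w\in I$.

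I do not anticipate a real obstacle; the only points needing a little care are the monomial description of $\mm T+I$ used above, and, in the final case, the decision to extract merely the divisibility $y^b\mid w$ by comparing $y$-exponents (rather than trying to cancel $x_1$ from $u\mid x_1w$, which may be divisible by $x_1$ more than once) and then to recombine $y^b$ with an $x$-variable dividing $w$ in order to invoke (b).
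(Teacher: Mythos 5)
Your proof is correct and follows essentially the same route as the paper: both directions are elementary monomial arguments using minimality of generators, and your (b)$\Rightarrow$(a) case analysis on a minimal generator $u\in G(I)$ dividing $x_1w$ is just a repackaging of the paper's explicit computation of $I:x_1$. The only cosmetic difference is that you merge the paper's two contradiction cases in (a)$\Rightarrow$(b) into the single observation that $u/x_1\in\mm T$ would force $u/x_1\in I$, contradicting minimality.
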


\begin{proof}

(a)\implies (b): Let $u\in G(I)$ such that $x_1$  divides $u$. Say, $u=x_1^au'$ with $a\geq 1$ and $x_1$ does not divide $u'$.  Suppose $a>1$; then $x_1^{a-1}u'\not\in I$. But $x_1^{a-1}u' \in I:x_1\sect (\mm T+I)$, a contradiction. Hence we see that $u=x_1u'$. Suppose some $x_i$ divides $u'$ for some $i>1$. Then $u'\in  I:x_1\sect (\mm T+I)$, but $u'\not\in I$, again a contradiction. Thus $u=x_1y^b$ for some exponent vector $b$. The monomial $x_iy^b$ belongs to $I:x_1\sect (\mm T+I)$ for all $i$, and hence $x_iy^b\in I$ for all $i$.

(b)\implies (a): We only need to prove the inclusion $I:x_1\sect (\mm T +I)\subset I$. Our assumptions imply that $G(I)$ can be written as $\{x_1y^{b_1},\ldots, x_1y^{b_k}, u_{k+1},\ldots,u_m\}$ with some integer vectors $b_i$,  and with monomials $u_j\in G(I)$ which are not divisible by  $x_1$ for $j=k+1,\ldots,m$. It follows that
\[
I:x_1=(y^{b_1},\ldots, y^{b_k},u_{k+1},\ldots,u_m).
\]
Thus if $u\in I:x_1\sect \mm T+I$, then $u$ is either a multiple of one of the $u_j$, in which case $u\in I$, or $u=vy^{b_i}$ for some $i$, and $x_j$ divides $v$ for some $j$. Write $v=x_jv'$; then $u=v'x_jy^{b_i}\in I$, according to the assumptions made in (b).
\end{proof}

We demonstrate the use of  Proposition~\ref{conclusion}  with
a simple example.

\begin{Example}
\label{simpleminor}
{\em
Let $I$ be the ideal of $2$-minors of
\[
\left(
\begin{array}{cccc} x_1 & a &  b\\
x_2 & b & c
\end{array}
\right ).
\]
We have $I=(-x_2a + x_1b, -x_2b + x_1c, -b^2 + ac)$ and $J=(-by_1 + ay_2 - xy_3, cy_1 - by_2 + x_2y_3)$.

We want to show that $x_1,x_2,a,c,b$ is a $d$-sequence. We apply Proposition \ref{conclusion} by using the reverse lexicographic order  induced by $x_1>x_2>a>b>c>y_1>y_2>y_3$. Then
\begin{eqnarray*}
 \ini(J_0)&=&(cy_1, by_1, b^2y_2),\quad \ini(J_1)=(cy_1, by_1, b^2y_2), \quad \ini(J_2)=(cy_1, by_1, b^2y_2),\\
 \ini(J_3)&=& (cy_1, by_1, b^2y_2)\quad\text{and}\quad   \ini(J_4)=(by_1, by_2).
\end{eqnarray*}

Thus  we see that the conditions of Proposition \ref{conclusion} are satisfied, and hence $x_1,x_2,a,c,b$ is indeed a $d$-sequence on $R(I)$.
}
\end{Example}

Example \ref{simpleminor} naturally leads us to pose  the following question.
Let $I$ be the ideal of $2$-minors of
\[
\left(
\begin{array}{ccccccc} x & a_1 & a_2 & \cdots & a_{n-1} & a_{n} \\
y & a_2 & a_3 & \cdots & a_{n} & a_{n+1}
\end{array}
\right ).
\]
This ideal defines a rational scroll. Is it true that  the sequence
$x, y, a_1, a_{n+1}, a_{n}, \ldots, a_{2}$
is a $d$-sequence on $R(I)$. This is the case for $n\leq 6$.

\section{Chordal graphs}
Let $[n] = \{1, 2, \ldots, n \}$ denote the vertex set
and $G$ a finite graph on $[n]$ with no loop and no multiple edge.
We write $E(G)$ for the set of edges of $G$.
The {\em edge ideal} of $G$ is the ideal $I(G)$ of
$S = K[x_1, \ldots, x_n]$
which is generated by those monomials $x_ix_j$
with $\{i, j\} \in E(G)$.
A subset $C$ of $[n]$ is called {\em vertex cover} of $G$ if,
for each edge $\{ i, j \}$ of $G$, one has either $i \in C$
or $j \in C$.
A {\em minimal} vertex cover of $G$ is
a vertex cover $C$ with the property that any proper subset
of $C$ cannot be a vertex cover of $G$.
We say that $G$ is {\em unmixed}
if all minimal vertex covers have the same cardinality.
A {\em mixed} graph is a graph which is not unmixed.
The {\em vertex cover ideal} of $G$ is the ideal of $S$
which is generated by those squarefree monomials
$\prod_{i \in C} x_i$ such that
$C$ is a minimal vertex cover of $G$.

The vertex cover ideal of $G$ is the {\em Alexander dual}
of $I(G)$; in other words, the ideal
\[
I_G:=I(G)^\vee= \bigcap_{\{i, j\} \in E(G)} (x_i, x_j).
\]

Recall that a finite graph $G$ is called {\em chordal}
if each cycle of $G$ of length $> 3$ possesses a chord.
In the paper \cite[Theorem 3.2]{FT} by Francisco and Van Tuyl
it is shown that every vertex cover ideal
of a chordal graph is componentwise linear.

\begin{Example}
\label{line}
{\em
(a)
Let $G$ be the line of length $3$.
Thus % the edge set of $G$ is
$E(G) = \{
\{1,2\},
\{2,3\},
\{3,4\}
\}$.
Then $G$ is unmixed and
% the vertex cover ideal
$I_{G} =
(x_1x_3, x_2x_3, x_2x_4)$.
Since
\[
x_1,x_3,x_2+x_4,x_2
\]
is a $d$-sequence with respect to $R(I_G)$,
all powers of $I_G$ are (componentwise) linear.

(b)
Let $G$ be the line of length $4$.
Thus $E(G) = \{
\{1,2\},
\{2,3\},
\{3,4\},
\{4,5\}
\}$.
Then $G$ is mixed and
$I_G
= (x_1x_3x_4,
x_1x_3x_5,
x_2x_4,
x_2x_3x_5)$.
Since
\[
x_1, x_3, x_5,x_2+x_4,x_2
\]
is a $d$-sequence with respect to $R(I_G)$,
all powers of $I_G$
are componentwise linear.
}
\end{Example}

A complete graph on $[n]$ is a finite graph such that $\{ i, j \} \in E(G)$ for all $1 \leq i < j \leq n$.
Let $G_n$ denote the complete graph on $[n]$.
Its vertex cover ideal $I_{G_n}$
is generated by all squarefree monomials of $S$
of degree $n - 1$;
in other words, $I_{G_n}$ is
an ideal of Veronese type (\cite{HH}).
In particular, $I_{G_n}$ is a polymatroidal ideal
(\cite{CH}).  Hence % by virtue of \cite[Theorem 00.00]{}
all powers of $I_{G_n}$ have linear resolutions.
Theorem \ref{main} then guarantees that a generic
$K$-basis
$\zb = z_1, \ldots, z_n$ of $S_1$ is
a $d$-sequence with respect to $R(I_{G_n})$.

Let $m \geq 1$ and $\Gamma$ a connected graph
on $[n+m]$ such that the induced subgraph of
$G$ on $[n]$ is $G_n$ and that
$\{ i, j \} \not\in E(G)$ if $n < i < j \leq n + m$.
Such a graph is called a {\em star graph}
based on $G_n$.  A star graph based on $G_n$
is a chordal graph.

\begin{Example}
\label{triangle}
{\em
Let $n = 3$ and $m = 3$.
Let $G$ be the
star graph based on $G_3$ with the edges
\[
\{ 1, 4 \},
\{ 2, 4 \},
\{ 2, 5 \},
\{ 3, 5 \},
\{ 1, 6 \},
\{ 3, 6 \}
\]
together with all edges of $G_3$.
Since both $\{ 1, 2, 3 \}$ and
$\{ 2, 3, 4, 6 \}$ are minimal vertex covers of $G$,
it follows that $G$ is mixed. Let $K$ be a field of characteristic $0$.
Since the $K$-basis
\[
x_4, x_5, x_6, x_1+x_2+x_3, 2x_1+3x_2+5x_3, 7x_1+11x_2+13x_3
\]
of $S_1$
is a $d$-sequence with respect to $R(I_{G})$,
all powers of $I_{G}$ are componentwise linear.
}
\end{Example}

The simple computational observation done in Example \ref{triangle}
now grows the following

\begin{Theorem}
\label{stargraph}
All powers of the vertex cover ideal of a star graph
based on $G_n$ are componentwise linear.
\end{Theorem}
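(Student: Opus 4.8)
The plan is to reduce the theorem to Corollary~\ref{obvious} and to verify its hypothesis by means of Proposition~\ref{conclusion}. Since the graded Betti numbers of $I_G^k$ and of $(I_G^k)_{\langle j\rangle}$ are unaffected by an extension of the base field, we may assume $K$ infinite; it then suffices to exhibit a single $K$-basis of $S_1$ that is a $d$-sequence with respect to $R(I_G)$. Guided by Example~\ref{triangle}, I would take the sequence of outer variables $x_{n+1},\dots,x_{n+m}$ followed by a generic $K$-basis $z_1,\dots,z_n$ of $\langle x_1,\dots,x_n\rangle$. The first step is to record the minimal generators of $I_G$: labelling the inner vertices $1,\dots,n$ and the outer vertices $n+1,\dots,n+m$, with $N(n+j)=S_j\subseteq[n]$, the minimal vertex covers of a star graph based on $G_n$ are precisely the sets $([n]\setminus\{k\})\cup\{\,n+j:k\in S_j\,\}$ for $k\in[n]$, together with $[n]$ itself exactly when every inner vertex has an outer neighbour. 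Hence $I_G$ is generated by the monomials $u_k=\bigl(\prod_{i\ne k}x_i\bigr)\prod_{j:\,k\in S_j}x_{n+j}$ and, in the exceptional case, also by $u_0=x_1\cdots x_n$. This explicit form makes the defining ideal $J$ of $R(I_G)$ tractable: among its generators are the clean binomials $\bigl(\prod_{j:\,k\in S_j}x_{n+j}\bigr)y_0-x_ky_k$ (when $u_0$ occurs), which stem from the identities $x_ku_k=u_0\prod_{j:\,k\in S_j}x_{n+j}$, together with the Taylor binomials of the remaining pairs of generators.

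For the first $m$ steps of the $d$-sequence condition --- those in which the current member of the sequence is an outer variable --- I would apply Lemma~\ref{initial} with a reverse lexicographic order (as in Example~\ref{simpleminor}) in which the $y$-variables come last, and verify that the resulting initial ideals $\ini(J_{i-1})$ satisfy the requirement of Proposition~\ref{conclusion}: because the binomials above are linear in the outer $x$'s and the Taylor binomials have a comparably controlled shape, after reducing modulo the outer variables already used up, every minimal generator of $\ini(J_{i-1})$ divisible by the current variable has the rigid form that variable times $y^b$ with the companion monomials $x_jy^b$ again in $\ini(J_{i-1})$. In effect one shows here that $x_{n+1},\dots,x_{n+m}$ is a $d$-sequence on $R(I_G)$ and pins down the quotient $R(I_G)/(x_{n+1},\dots,x_{n+m})$; note that this quotient is \emph{not} the Rees algebra of the contracted ideal --- already $u_1\in I_G\sect(x_{n+1},\dots,x_{n+m})$ while $u_1\notin(x_{n+1},\dots,x_{n+m})I_G$ --- but it is an explicit binomial quotient ring, and only this explicit form is needed.

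For the remaining $n$ steps the relevant object is $I_G$ modulo the outer variables, namely $\overline{I_G}=\bigl(\prod_{i\notin U}x_i\bigr)\,L_U$, where $U\subseteq[n]$ is the set of inner vertices with no outer neighbour and $L_U$ is the vertex cover ideal of the complete graph on $U$ --- that is, the squarefree Veronese ideal of degree $|U|-1$ in the variables $\{x_k:k\in U\}$, understood to be $S$ when $|U|\le1$. This $\overline{I_G}$ is polymatroidal, being the product of a principal monomial ideal and a polymatroidal ideal, so by \cite{CH} all of its powers have linear resolutions, whence by Corollary~\ref{obvious} a generic $K$-basis of $\langle x_1,\dots,x_n\rangle$ is a $d$-sequence with respect to $R(\overline{I_G})$. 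Combining this with the explicit description of $R(I_G)/(x_{n+1},\dots,x_{n+m})$ from the previous step, one then verifies
\[(x_{n+1},\dots,x_{n+m},z_1,\dots,z_{i-1})R(I_G):z_i\,\sect\,\mm R(I_G)=(x_{n+1},\dots,x_{n+m},z_1,\dots,z_{i-1})R(I_G)\]
for $i=1,\dots,n$: the successive quotients of $R(I_G)$ have a controlled, essentially toric, structure, so a generic choice of $z_1,\dots,z_n$ lies in sufficiently general position relative to their associated primes, and by Proposition~\ref{conclusion} it is enough to check this after passing to initial ideals.

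The main obstacle is exactly this last bookkeeping, carried out uniformly in $n$ and $m$: one must control the minimal generators of $\ini(J_{i-1})$ --- equivalently, the associated primes of the successive quotients of $R(I_G)$ --- for every index $i$ and every star graph, and check that whenever the current variable divides such a generator it does so in the rigid fashion $u=x_iy^b$ with $x_jy^b\in\ini(J_{i-1})$ for all $j\ge i$. What rules out a purely formal argument is that forming the Rees algebra does not commute with the quotient by the outer variables, so one cannot simply invoke the polymatroidal case for $R(\overline{I_G})$; the colon computations must be done inside $R(I_G)$ (or inside $\ini(J)$) itself, and it is the interplay of the clean relations $\bigl(\prod x_{n+j}\bigr)y_0-x_ky_k$ with the Veronese structure that persists modulo the outer variables which should make the verification go through.
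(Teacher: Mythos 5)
Your overall skeleton --- outer variables first, then a generic basis of the span of the inner variables, and the correct list of minimal generators of $I_G$ --- is the same as the paper's, but the two steps that carry all the weight are missing. First, you never determine the defining ideal $J$ of $R(I_G)$. The paper proves that $J$ is generated by exactly the $n$ binomials $f_i=x_iy_i-\bigl(\prod_k t_k\bigr)y_{n+1}$ (your ``clean binomials''), by showing these form a regular sequence with coprime initial terms, comparing dimensions, and proving primality of the ideal they generate via localization at $x_1\cdots x_n$; this yields $\ini_<(J)=(x_1y_1,\ldots,x_ny_n)$, from which it is immediate that the outer variables form a regular sequence --- hence a $d$-sequence --- on $R(I_G)$. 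Your appeal to ``Taylor binomials of comparably controlled shape'' is not a substitute: without an actual Gr\"obner basis (equivalently, generating set) of $J$, neither Lemma~\ref{initial} nor Proposition~\ref{conclusion} can be invoked, and producing that Gr\"obner basis is precisely the structural result you skipped.

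Second, the last $n$ steps --- where your own final paragraph admits the ``main obstacle'' lies --- are not closed, and the method you sketch cannot close them. Proposition~\ref{conclusion} is a statement about the coordinate sequence $x_1,\ldots,x_n$ and a monomial initial ideal; to apply it to a generic basis $z_1,\ldots,z_n$ you would need a generic change of coordinates, which destroys the binomial structure you rely on. More fundamentally, ``generic position relative to the associated primes'' proves regular-sequence statements, and no linear form in $x_1,\ldots,x_n$ is a nonzerodivisor on $\overline{R(I_G)}\iso S[y_1,\ldots,y_n]/(x_1y_1,\ldots,x_ny_n)$, since $(x_1,\ldots,x_n)$ is a minimal prime of $(x_1y_1,\ldots,x_ny_n)$; the generic $z_i$ are genuinely only a $d$-sequence, which is weaker and needs a different argument. (Also, the construction of $d$-sequences from generic elements inside the proof of Theorem~\ref{main} presupposes acyclicity of the approximation complex, i.e.\ the very componentwise-linearity one is trying to prove, so that route is circular here.) The paper fills this gap with Lemma~\ref{special}: for $R=S[y_1,\ldots,y_n]/(x_1y_1,\ldots,x_ny_n)$ the Koszul homology $H(\mm;R)$ is the exterior algebra on $H_1(\mm;R)$ because $R$ is a complete intersection, so $\mathcal{M}(\mm;R)$ is identified with the Koszul complex on the regular sequence $y_1e_1,\ldots,y_ne_n$ in $K[y_1,\ldots,y_n,e_1,\ldots,e_n]$ and is therefore acyclic; Theorem~\ref{hsv2} then gives the $d$-sequence property for a generic basis. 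Without an argument of this kind your proposal remains an outline rather than a proof.
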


\begin{proof}
Let $G$ be a star graph on the  vertex set $\{ x_1, \ldots, x_n, t_1, \ldots, t_m \}$ based on $G_n$ with vertex set
$\{x_1,\ldots,x_n\}$.

Let $T=S[t_1, \ldots, t_m]$
denote the polynomial ring in $n+m$ variables
over $K$ and $I_G \subset T$
the vertex cover ideal of $G$.
% Let $z_1, \ldots, z_n$ be a generic $K$-basis of
% of $S_1$. We claim that the $K$-basis
% $t_1, \ldots, t_m, z_1, \ldots, z_n$
% of $T_1$
% is a $d$-sequence with respect to
% $R(I_G)$.

First, we observe that $I_G$ is generated by the following monomials:
\[
u_i=\prod_{j=1\atop j\neq i}^nx_j\prod_{k \atop\{x_i,t_k\}\in E(G)}t_k, \quad i=1,\ldots,n,
\]
and
\[
u_{n+1}=\prod_{j=1}^nx_j.
\]
It is clear that all these monomials belong to $I_G$. Conversely, suppose that $C$ is an arbitrary vertex cover of $G$ and $u_C$ its corresponding monomial. If $\{x_1,\ldots,x_n\}\subset C$, then $u_{n+1}$ divides $u_C$. Thus assume
that $x_i\not\in C$ for some $i$. Then $x_j\in C$ for all $j\neq i$, because if $x_j\not\in C$ for some $j\neq i$, then the edge $\{x_i,x_j\}$ is not covered by $C$. It follows that $\{x_1,\ldots, x_{i-1}, x_{i+1},\ldots,x_n\}\subset C$. Moreover since $x_i\not\in C$ we must have $t_k\in C$ for all $k$ with $\{x_i,t_k\}\in E(G)$. This shows that $u_i$ divides $u_C$.

Notice that $u_{n+1}$ is not always part of the minimal monomial set $G(I_G)$ of generators of $I_G$. In fact, $u_{n+1}\in G(I_G)$ if and only if for each $i=1,\ldots,n$ there exists $k$ such that $\{x_i,y_k\}\in E(G)$. For the following considerations, however,
it is not important whether $u_{n+1}$ belongs to $G(I_G)$ or not.

Second, we write the
Rees algebra of $I_G$ as the factor ring
$R(I_G)=T[y_1,\ldots,y_{n+1}]/J$,
where $J$ is the kernel of the $T$-algebra homomorphism given by $y_j\mapsto u_j$ for $j=1,\ldots,n+1$.

We claim that $J$ is generated by the binomials
\[
f_i=x_iy_i-(\prod_{k}t_k)y_{n+1}, \quad i=1,\ldots n,
\]
where for each $i$ the product $\prod_{k}t_k$ in $f_i$ is taken over all $k$
% such that $\{x_i,t_k\}$ is an edge of $G$.
with $\{x_i,t_k\} \in E(G)$.

To see why this is true, we write $L$ for the ideal generated by the binomials $f_1,\ldots,f_n$, and have to show that $L=J$. We first observe that $f_1,\ldots,f_n$ is a regular sequence. Indeed, let $<$ be a lexicographic monomial order with  $y_i>y_{n+1}>x_j,t_k$ for all $i,j, k$ and $i\neq n+1$. Then $\ini_<(f_i)=x_iy_i$ for $i=1,\ldots,n$. Since $\gcd(\ini_<(f_i),\ini_<(f_j)) = 1$ for all $i\neq j$, it follows that $f_1,\ldots,f_n$ is a Gr\"obner basis of $L$, and that $f_1,\ldots,f_n$ is a regular sequence.

Now it follows that
$\dim T[y_1,\ldots,y_{n+1}]/L=\dim T+1=\dim R(I_G)$.
Hence, if $L$ is a prime ideal, then one has $L=J$.
To show that $L$ is a prime ideal,
we consider the ideal $(L,x_1y_1)=( vy_{n+1}, x_1y_1, f_2,\ldots,f_n)$, where $v=\prod_{k}t_k$ and where the product is taken over all $k$ with $\{x_1,t_k\}\in E(G)$. The initial terms of the generators of
the ideal
$(L,x_1y_1)$ are $vy_{n+1}, x_1y_1,\ldots,x_ny_n$. Since these initial terms form a regular sequence of monomials,
it follows as above that $f_1,\ldots,f_n, x_1y_1$ is a regular sequence. In particular
the variable
$x_1$ is a nonzero divisor on  $T[y_1,\ldots,y_{n+1}]/L$. By the same reason all $x_i$ are nonzero divisors on $T[y_1,\ldots,y_{n+1}]/L$. Let $x=\prod_{i=1}^n x_i$; then the natural homomorphism
$T[y_1,\ldots,y_{n+1}]/L\to (T[y_1,\ldots,y_{n+1}]/L)_x$ is a monomorphism. In the localized polynomial ring $T[y_1,\ldots,y_{n+1}]_x$ the ideal $L_x$ is generated by the elements $x_i^{-1}f_i=y_i-x_i^{-1}(\prod_{k}t_k)y_{n+1}$. Thus we see that $(T[y_1,\ldots,y_{n+1}]/L)_x\iso T[y_{n+1}]_x$, which is a domain. It follows that $T[y_1,\ldots,y_{n+1}]/L$ is a domain. In other words, one has $L=J$, as desired.

Now we have that $\ini_<(J)=(x_1y_1,\ldots,x_ny_n)$. Hence  $t_1,\ldots, t_m$ is a regular sequence on $T[y_1,\ldots,y_{n+1}]/\ini_<(J)$, and consequently is a regular sequence on the Rees algebra $R(I_{G})=T[y_1,\ldots,y_{n+1}]/J$. Since any regular sequence is a $d$-sequence, it remains to be shown that we can find a basis $z_1,\ldots ,z_n$ of $S_1$ which is a $d$-sequence on $\overline{R(I_{G})}=R(I_{G})/(t_1,\ldots,t_m)$. Note that
\[
\overline{R(I_{G})}\iso S[y_1,\ldots,y_n]/(x_1y_1,\ldots,x_ny_n).
\]
Thus the theorem follows form the following Lemma \ref{special}.
%Let $z_1=\sum_{i=1}^na_ix_i$ with $a_i\in K$ $a_i\neq 0$ for $i=1,\ldots,n$. The next crucial observation is the following equality of ideals
%\begin{eqnarray}
%\label{identity}
%(x_1y_1,\ldots,x_ny_n): z_1=(x_1y_1,\ldots,x_ny_n, y_1y_2\cdots y_n).
%\end{eqnarray}
%This equality implies that $z_1$ is a $d$-sequence (of length 1) on $\overline{R(I)}$, because if
%$
%f\in (x_1y_1,\ldots,x_ny_n): z_1\sect (x_1,\ldots,x_n),
%$
%then, due to (\ref{identity}),   $f=\sum_{i=1}^n\g_ix_iy_i+hy_1\cdots y_n$
%with $h\in (x_1,\ldots,x_n)$. This implies that $f\in (x_1y_1,\ldots,x_ny_n)$.
%Moreover, we have $\overline{R(I)}/(z_1)\iso K[x_2,\ldots,x_n]
\end{proof}

\begin{Lemma}
\label{special}
Let $R=S[y_1,\ldots,y_n]/(x_1y_1,\ldots,x_ny_n)$, and $z_1,\ldots,z_n$   a generic $K$-basis of $S_1$. Then $z_1,\ldots,z_n$ is a $d$-sequence on $R$.
\end{Lemma}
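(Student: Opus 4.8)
The plan is to reduce the $d$-sequence property to a statement about nonzerodivisors and then to read that statement off the structure of $R$ modulo a generic linear sequence. Since $z_1,\dots,z_n$ is a $K$-basis of $S_1$ we have $(z_1,\dots,z_n)R=\mm R$, so $z_1,\dots,z_n$ is a $d$-sequence on $R$ exactly when $(z_1,\dots,z_{i-1})R:_Rz_i\,\sect\,\mm R=(z_1,\dots,z_{i-1})R$ for $i=1,\dots,n$. The inclusion $\supseteq$ is automatic, and modulo $(z_1,\dots,z_{i-1})R$ the other one says precisely that $z_i$ is a nonzerodivisor on the module $\mm R/(z_1,\dots,z_{i-1})R$. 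Now fix $i$ and make a linear change of the $x$'s carrying $z_1,\dots,z_{i-1}$ to variables; then $R/(z_1,\dots,z_{i-1})R$ is identified with $B:=P[y_1,\dots,y_n]/J$, where $P=K[w_1,\dots,w_d]$ with $d=n-i+1\ge 1$, $J=(\ell_1y_1,\dots,\ell_ny_n)$, and $\ell_1,\dots,\ell_n$ are the images of $x_1,\dots,x_n$, in particular a spanning set of the space $P_1$ of linear forms of $P$. Under this identification $\mm R/(z_1,\dots,z_{i-1})R$ becomes $\nn:=(w_1,\dots,w_d)B=(\ell_1,\dots,\ell_n)B$ and $z_i$ a generic linear form of $P$, so it remains to show that a generic linear form of $P$ is a nonzerodivisor on $\nn$.

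To this end I would describe $\Spec B$. For $T\subseteq\{1,\dots,n\}$ put $Q_T=(\ell_j:j\in T)+(y_j:j\notin T)$. Then $B/Q_T$ is a polynomial ring over $K$ — after a linear change of the $w$'s the generators of $Q_T$ become a subset of the variables — so $Q_T$ is prime, and $J\subseteq\bigcap_TQ_T$ is clear. The crucial point is the reverse inclusion, hence $J=\bigcap_TQ_T$ and $B$ is reduced. Grade $P[y_1,\dots,y_n]$ by $\ZZ^n$ with $\deg w_k=0$, $\deg y_j=e_j$; then $J$ and all $Q_T$ are multigraded, so it suffices to take a multihomogeneous element $f=g\,y^b\in\bigcap_TQ_T$ and show $f\in J$. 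Applying membership in $Q_T$ for $T=\{j:b_j\ge 1\}$ — for which $y^b$ is the only monomial of multidegree $b$ and involves no $y_j$ with $j\notin T$ — one reads off $g\in(\ell_j:b_j\ge 1)$; writing $g=\sum_{b_j\ge 1}c_j\ell_j$ yields $f=\sum_{b_j\ge 1}c_j\,y^{b-e_j}(\ell_jy_j)\in J$. Consequently the minimal primes of $B$ are the minimal members of $\{Q_T\}$, and $\nn=Q_{\{1,\dots,n\}}$ is one of them: it is contained in no other $Q_T$, since $Q_T$ with $T\ne\{1,\dots,n\}$ contains a variable $y_j$, $j\notin T$, which is not in $\nn$.

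It follows that $0:_B\nn=\bigcap_{\pp\in\Min B,\,\nn\not\subseteq\pp}\pp$ (using that $B$ is reduced), and since $\nn$ is a minimal prime and is therefore incomparable with the other minimal primes, this equals $\bigcap_{\pp\in\Min B,\,\pp\ne\nn}\pp$; hence $\nn\,\sect\,(0:_B\nn)=\bigcap_{\pp\in\Min B}\pp=(0)$. Therefore no associated prime of the $B$-module $\nn$ contains $\nn$ (if $\pp=\Ann_B(g)$ with $0\ne g\in\nn$ and $\pp\supseteq\nn$, then $g\in\nn\,\sect\,(0:_B\nn)$), so in particular none of them contains $P_1$; as $\Ass_B(\nn)$ is finite and each member meets $P_1$ in a proper subspace, a generic linear form of $P$ — in particular $z_i$ — avoids all of $\Ass_B(\nn)$ and is a nonzerodivisor on $\nn$. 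Performing these generic choices for $i=1,\dots,n$ proves the lemma. I expect the only genuinely nonroutine step to be the equality $J=\bigcap_TQ_T$; everything else is formal, and the role of genericity is confined to the final prime avoidance — it cannot be dropped, since $x_1,\dots,x_n$ itself is in general not a $d$-sequence on $R$ (for $n\ge 2$ the element $x_1$ lies in the associated prime $(x_1,y_2,\dots,y_n)$ of $\mm R$).
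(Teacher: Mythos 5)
Your proof is correct, but it takes a genuinely different route from the paper. The paper proves the lemma by invoking Theorem~\ref{hsv2}: it computes the Koszul homology $H_\bullet(\mm;R)$ (using that $R$ is a complete intersection, so the homology is the exterior algebra on $H_1$, which is free with basis the classes $[y_ie_i]$), identifies the approximation complex $\mathcal{M}(\mm;R)$ with the Koszul complex $K(y_1e_1,\ldots,y_ne_n;K[y,e])$ on a regular sequence, concludes acyclicity, and then gets the $d$-sequence property for a generic basis via the same generic-choice mechanism used in the proof of Theorem~\ref{main}. You instead bypass approximation complexes entirely: you unwind the defining condition into the statement that $z_i$ is a nonzerodivisor on $\mm R/(z_1,\ldots,z_{i-1})R$, identify $R/(z_1,\ldots,z_{i-1})R$ with $P[y_1,\ldots,y_n]/(\ell_1y_1,\ldots,\ell_ny_n)$, prove the explicit prime decomposition $(\ell_1y_1,\ldots,\ell_ny_n)=\bigcap_T Q_T$ by the $\ZZ^n$-multigrading argument (which is correct as written), observe that $\mm$ becomes a minimal prime $\nn$ of the reduced ring $B$, deduce $\nn\sect(0:_B\nn)=0$, hence that no prime in $\Ass_B(\nn)$ contains $P_1$, and finish by prime avoidance over the infinite field. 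Your approach is more elementary and self-contained, it makes the role of genericity completely explicit (including the nice observation that $x_1,\ldots,x_n$ itself fails because $x_1$ lies in an associated prime of $\mm R$), and it proves the generic-basis statement directly rather than via the existence statement of Theorem~\ref{hsv2} combined with the genericity discussion in the proof of Theorem~\ref{main}; what the paper's argument buys is brevity, since the approximation-complex machinery is already set up and quoted for the main theorem, and a structural explanation (acyclicity of $\mathcal{M}(\mm;R)$) that plugs straight into that framework. The only points to polish in your write-up are cosmetic: when you say the generators of $Q_T$ become ``a subset of the variables'' after a change of coordinates, you should allow the $\ell_j$, $j\in T$, to be linearly dependent (pass to a basis of their span), and the genericity of $z_i$ is, as in the paper, an open condition depending on the previously chosen $z_1,\ldots,z_{i-1}$ -- exactly the iterated open choice the paper itself uses.
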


\begin{proof}
We will apply Theorem~\ref{hsv2} and have to show that the approximation complex $\mathcal{M}(\mm;R)$ is acyclic, where $\mm=(x_1,\ldots,x_n)$. Recall that
\[
\mathcal{M}(\mm;R)_i=H_i(\mm;R)\tensor A(-i),
\]
where $A=R[e_1,\ldots,e_n]$ is a polynomial ring over $R$ in the variables $e_1,\ldots,e_n$, and where $H_i(\mm;R)$ is the $i$th Koszul homology of the sequence $x_1,\ldots,x_n$ with values in $R$. Since $R$ is a complete intersection, it follows from
\cite[Theorem 2.3.9]{BH}
that $H(\mm;R)$ is the $i$th exterior algebra of $H_1(\mm;R)$. Since $H_1(\mm;R)$ is a free $K[y_1,\ldots,y_n]$-modules whose basis is given by the homology classes $[y_ie_i]$ of the cycles $y_ie_i$, we see that $H_i(\mm;R)$ is free $(R/\mm)[y_1,\ldots,y_n]$-module with basis $\{b_J \: J=\{j_1,j_2,\ldots, j_i\}\; 1\leq j_1<j_2<\ldots <j_i\leq n\}$, where
\[
b_J=[y_{j_1}y_{j_2}\cdots y_{j_i}e_{j_1}\wedge e_{j_2}\wedge \cdots  \wedge e_{j_i}].
\]
Thus $\mathcal{M}(\mm;R)_i$ can be identified with the free $B$-module $\Dirsum_{J, |J|=i}b_JB$, where $$B=K[y_1,\ldots,y_n,e_1,\ldots,e_n].$$
After this identification the differential of $\mathcal{M}(\mm;R)$ is given by
\[
\partial(b_J)=\sum_{k=1}^i(-1)^{k+1}y_{j_k}e_{j_k}b_{J\setminus \{j_k\}}\quad \text{for}\quad J=\{j_1<j_2<\ldots< j_i\}
\]
Hence $\mathcal{M}(\mm;R)$ is isomorphic to the Koszul complex  $K(y_1e_1,\ldots, y_ne_n;B)$. Since the sequence
$y_1e_1,\ldots, y_ne_n$ is a regular sequence,
it follows then that $\mathcal{M}(\mm;R)$ is acyclic, as desired.
\end{proof}

Theorem~\ref{stargraph} and  computational evidence, including Example \ref{Boston} leads us to
present the following

\begin{Conjecture}
\label{chordalconjecture}
{\em
All powers of the vertex cover ideal of a chordal graph
are componentwise linear.
}
\end{Conjecture}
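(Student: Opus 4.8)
The plan is to reduce the conjecture, via Corollary~\ref{obvious}, to a statement about $d$-sequences: for the vertex cover ideal $I_G$ of a chordal graph $G$ it suffices to show that a generic $K$-basis $\mathbf{z}=z_1,\ldots,z_n$ of $S_1$ is a $d$-sequence with respect to the Rees algebra $R(I_G)$. Equivalently, through Theorems~\ref{lpd}, \ref{hsv1} and \ref{hsv2}, one may instead verify that all the approximation complexes $\mathcal{M}(\mm;I_G^k)$ are acyclic. The combinatorial input I would exploit is that every chordal graph admits a perfect elimination ordering of its vertices, so that $G$ can be built up one simplicial vertex at a time; this suggests an induction on the number of vertices, with Theorem~\ref{stargraph} (and Lemma~\ref{special}) serving as the base and model case, since a star graph is exactly the situation of a clique with simplicial vertices attached.

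For the inductive step, let $v$ be a simplicial vertex of $G$, so that its neighborhood $N(v)$ is a clique, and set $G'=G-v$, which is again chordal. First I would analyze how the minimal vertex covers of $G$ decompose relative to those of $G'$: a minimal cover either omits $v$, forcing all of $N(v)$ into the cover, or contains $v$, and in both cases the restriction to $G'$ is controlled by the covers of $G'$. This should yield an explicit generating set for $I_G$ in terms of that of $I_{G'}$, in the spirit of the monomials $u_i=\prod_{j\neq i}x_j\prod_k t_k$ produced in the proof of Theorem~\ref{stargraph}. The aim is then to describe the defining ideal $J$ of $R(I_G)=T[y_1,\ldots]/J$ concretely enough that, after possibly splitting off a regular sequence analogous to $t_1,\ldots,t_m$ and reducing to a tractable quotient generalizing the complete intersection of Lemma~\ref{special}, one can invoke Proposition~\ref{conclusion}: one would seek a monomial order on each $T/(x_1,\ldots,x_{i-1})$ so that every generator $u\in G(\ini(J_{i-1}))$ divisible by $x_i$ has the form $u=x_iy^b$ with $x_jy^b\in\ini(J_{i-1})$ for all $j\geq i$. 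The order in which the variables are peeled off should be dictated by the elimination ordering.

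The main obstacle, and the reason the conjecture remains open, is precisely the control of the defining ideal $J$ and its initial ideal through this induction. In the star-graph case $J$ is a complete intersection of binomials $f_i=x_iy_i-(\prod_k t_k)y_{n+1}$ with a transparent Gr\"obner basis, which is exactly what makes the reduction to Lemma~\ref{special} possible; but for a general chordal graph the minimal vertex covers, and hence the toric-type relations among the generators of $I_G$, are far more intricate, and there is no evident reason for $J$ to be a complete intersection or for its initial ideal to satisfy the divisibility condition of Proposition~\ref{conclusion} under any single monomial order. A further subtlety is that componentwise linearity can in principle depend on the characteristic of $K$ (compare Terai's projective-plane example in the linear-resolution setting), so a uniform argument must avoid explicit resolution computations and stay at the level of the $d$-sequence and approximation-complex criterion. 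I would therefore expect the crux of a proof to be a structural description of $R(I_G)$ for chordal $G$, plausibly obtained from the clique-tree decomposition of $G$, that renders the hypotheses of Proposition~\ref{conclusion} verifiable inductively; producing such a description is where the real work lies.
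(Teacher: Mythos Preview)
This statement is a \emph{conjecture} in the paper, not a theorem: the paper offers no proof, only supporting evidence in the form of Theorem~\ref{stargraph} (star graphs), Theorem~\ref{oberwolfach} (Cohen--Macaulay chordal graphs), and the computational Example~\ref{Boston}. Your proposal is therefore not to be compared against any argument of the authors; rather, it is a research plan, and you yourself acknowledge as much when you write that ``the reason the conjecture remains open'' is the lack of control over the defining ideal $J$ of $R(I_G)$ and its initial ideal.

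As a plan it is sensible and well aligned with the paper's toolkit: reducing to the $d$-sequence criterion via Corollary~\ref{obvious}, exploiting a perfect elimination ordering for induction, and aiming at the Gr\"obner condition of Proposition~\ref{conclusion} are exactly the moves the paper sets up. But you have correctly located the genuine gap: for a general chordal graph the relations among the minimal vertex covers are not a complete intersection of the simple binomial shape $x_iy_i-(\prod_k t_k)y_{n+1}$ that drives the star-graph argument, and there is no known structural description of $\ini(J_{i-1})$ that would make Proposition~\ref{conclusion} verifiable. Your inductive step, as stated, does not actually produce such a description; the decomposition of minimal covers of $G$ relative to those of $G'=G-v$ is easy to write down, but translating it into control of the Rees relations modulo $(x_1,\ldots,x_{i-1})$ is precisely the missing idea. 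In short, what you have is an honest outline of where a proof would have to go, not a proof, and the paper is in the same position.
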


\begin{Example}
\label{Boston}
{\em
Let $G$ be the chordal graph drawn below.
The graph $G$ is no longer a star graph.
\begin{center}
%WinTpicVersion3.08
\unitlength 0.1in
\begin{picture}( 20.4000,  8.8500)(  7.2000,-26.0000)
% CIRCLE 2 0 0 0
% 4 860 2500 860 2600 860 2600 860 2600
% 
\special{pn 8}%
\special{sh 0.600}%
\special{ar 860 2500 100 100  0.0000000 6.2831853}%
% CIRCLE 2 0 0 0
% 4 1460 2500 1460 2600 1460 2600 1460 2600
% 
\special{pn 8}%
\special{sh 0.600}%
\special{ar 1460 2500 100 100  0.0000000 6.2831853}%
% CIRCLE 2 0 0 0
% 4 2060 2500 2060 2600 2060 2600 2060 2600
% 
\special{pn 8}%
\special{sh 0.600}%
\special{ar 2060 2500 100 100  0.0000000 6.2831853}%
% CIRCLE 2 0 0 0
% 4 2660 2500 2660 2600 2660 2600 2660 2600
% 
\special{pn 8}%
\special{sh 0.600}%
\special{ar 2660 2500 100 100  0.0000000 6.2831853}%
% CIRCLE 2 0 0 0
% 4 1160 2000 1160 2100 1160 2100 1160 2100
% 
\special{pn 8}%
\special{sh 0.600}%
\special{ar 1160 2000 100 100  0.0000000 6.2831853}%
% CIRCLE 2 0 0 0
% 4 1760 2000 1760 2100 1760 2100 1760 2100
% 
\special{pn 8}%
\special{sh 0.600}%
\special{ar 1760 2000 100 100  0.0000000 6.2831853}%
% CIRCLE 2 0 0 0
% 4 2360 2000 2360 2100 2360 2100 2360 2100
% 
\special{pn 8}%
\special{sh 0.600}%
\special{ar 2360 2000 100 100  0.0000000 6.2831853}%
% LINE 2 0 3 0
% 10 960 2500 1360 2500 1560 2500 1960 2500 2160 2500 2560 2500 2260 2000 1860 2000 1660 2000 1260 2000
% 
\special{pn 8}%
\special{pa 960 2500}%
\special{pa 1360 2500}%
\special{fp}%
\special{pa 1560 2500}%
\special{pa 1960 2500}%
\special{fp}%
\special{pa 2160 2500}%
\special{pa 2560 2500}%
\special{fp}%
\special{pa 2260 2000}%
\special{pa 1860 2000}%
\special{fp}%
\special{pa 1660 2000}%
\special{pa 1260 2000}%
\special{fp}%
% LINE 2 0 3 0
% 2 930 2435 1105 2080
% 
\special{pn 8}%
\special{pa 930 2436}%
\special{pa 1106 2080}%
\special{fp}%
% LINE 2 0 3 0
% 2 1525 2430 1700 2075
% 
\special{pn 8}%
\special{pa 1526 2430}%
\special{pa 1700 2076}%
\special{fp}%
% LINE 2 0 3 0
% 2 2125 2435 2300 2080
% 
\special{pn 8}%
\special{pa 2126 2436}%
\special{pa 2300 2080}%
\special{fp}%
% LINE 2 0 3 0
% 2 2600 2430 2425 2075
% 
\special{pn 8}%
\special{pa 2600 2430}%
\special{pa 2426 2076}%
\special{fp}%
% LINE 2 0 3 0
% 2 1995 2420 1820 2065
% 
\special{pn 8}%
\special{pa 1996 2420}%
\special{pa 1820 2066}%
\special{fp}%
% STR 2 0 3 0
% 3 1060 1835 1060 1885 2 0
% $1$
\put(10.6000,-18.8500){\makebox(0,0)[lb]{$1$}}%
% STR 2 0 3 0
% 3 1700 1835 1700 1885 2 0
% $2$
\put(17.0000,-18.8500){\makebox(0,0)[lb]{$2$}}%
% STR 2 0 3 0
% 3 2360 1835 2360 1885 2 0
% $3$
\put(23.6000,-18.8500){\makebox(0,0)[lb]{$3$}}%
% STR 2 0 3 0
% 3 720 2675 720 2725 2 0
% $4$
\put(7.2000,-27.2500){\makebox(0,0)[lb]{$4$}}%
% STR 2 0 3 0
% 3 1420 2675 1420 2725 2 0
% $5$
\put(14.2000,-27.2500){\makebox(0,0)[lb]{$5$}}%
% STR 2 0 3 0
% 3 2720 2675 2720 2725 2 0
% $7$
\put(27.2000,-27.2500){\makebox(0,0)[lb]{$7$}}%
% STR 2 0 3 0
% 3 2120 2675 2120 2725 2 0
% $6$
\put(21.2000,-27.2500){\makebox(0,0)[lb]{$6$}}%
% LINE 2 0 3 0
% 2 1400 2425 1225 2070
% 
\special{pn 8}%
\special{pa 1400 2426}%
\special{pa 1226 2070}%
\special{fp}%
\end{picture}%

\end{center}

\bigskip

\noindent
Then $G$ is mixed and $I_G$ is generated by
$$
x_1 x_3 x_5 x_6 , \  x_2 x_3 x_4 x_5 x_6 , \  x_1 x_2 x_4 x_6 x_7 , \  x_1 x_2 x_3 x_4 x_6 ,
$$
$$
x_2 x_3 x_4 x_5 x_7 , \  x_1 x_2 x_3 x_5 x_7 , \  x_1 x_2 x_5 x_6 x_7 ,  \ x_2 x_4 x_5 x_6 x_7.
$$
We can easily find a $d$-sequence which is a ``generic'' $K$-basis
of $S_1$ created by {\tt CoCoA} with ``{\tt Randomized}.''
Hence all powers of
the vertex cover ideal of $G$ are componentwise linear.
}
\end{Example}

In \cite{HHZ1} the Cohen--Macaulay chordal graphs are classified. Let $G$ be a finite  graph on $[n]$, and $\Delta(G)$ its clique complex, that is to say, the simplicial complex whose faces are the cliques (i.e. complete subgraphs) of $G$. It is shown in \cite{HHZ1} that if  $G$ is chordal, then $G$ is  Cohen--Macaulay if and only if $[n]$ is the disjoint union of those  facets of $\Delta(G)$ which have  a free vertex. A vertex of a facet is {\em free}, if it belongs to no other facet.

In support of Conjecture \ref{chordalconjecture} we have the following result.

\begin{Theorem}
\label{oberwolfach}
Let $G$ be a graph on $[n]$,  and suppose that $[n]$ is the disjoint union of those facets of the clique complex of $G$ with a free vertex. Then all powers of  $I_G$ have a linear resolution.
\end{Theorem}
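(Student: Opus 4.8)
The plan is to deduce the theorem from Corollary~\ref{obvious}. Write $[n]=F_1\sqcup\cdots\sqcup F_r$, where each $F_i$ is a facet of the clique complex of $G$ possessing a free vertex $v_i$. First I would record that this forces $G$ to be unmixed: since $v_i$ lies in no facet other than $F_i$, every edge of $G$ at $v_i$ lies inside $F_i$, so $v_i$ is a simplicial vertex with $N[v_i]=F_i$; any vertex cover of $G$ meets the clique $F_i$ in at least $|F_i|-1$ vertices, so every minimal vertex cover has at least $\sum_i(|F_i|-1)=n-r$ elements; and if a minimal vertex cover $C$ contained all of some $F_i$, then $C\setminus\{v_i\}$ would still be a vertex cover (the other endpoint of every edge at $v_i$ lies in $F_i\subseteq C$), a contradiction. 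Hence every minimal vertex cover has cardinality exactly $n-r$, so $I_G$, and therefore every power $I_G^k$, is generated in a single degree; for such ideals ``componentwise linear'' is the same as ``has a linear resolution''. Thus it suffices, by Corollary~\ref{obvious} and Theorem~\ref{main}, to exhibit a $K$-basis of $S_1$ that is a $d$-sequence with respect to $R(I_G)$.

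Next I would make everything explicit. The minimal generators of $I_G$ are the monomials $\prod_{v\notin W}x_v=\prod_{i=1}^r\bigl(P_i/x_{w_i}\bigr)$, where $P_i=\prod_{v\in F_i}x_v$ and $W=\{w_1,\dots,w_r\}$ runs over the independent transversals of the partition (equivalently the maximal independent sets of $G$). Presenting $R(I_G)=T/J$ with $T=S[\,y_W\,]$ and $y_W\mapsto\prod_{v\notin W}x_v$, the hypothesis produces many binomials in $J$: whenever $W$ and $W'$ are independent transversals differing only in their $i$-th coordinate ($w_i$ versus $w_i'$), one has $x_{w_i'}y_W-x_{w_i}y_{W'}\in J$. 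The point will be that, because of the special structure forced by the disjoint-facet hypothesis — free vertices have all their neighbours inside their own facet, and the crossing edges between distinct facets are organised in a chain/clique-tree pattern (a chordality-type consequence of the hypothesis) — these ``swap'' binomials, for a suitable term order, form a Gr\"obner basis of $J$ whose initial ideal has exactly the shape required by Proposition~\ref{conclusion}.

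To organise the verification I would use induction on the number $r$ of facets. The base case $r=1$ is the complete graph $G_n$, whose vertex cover ideal is of Veronese type and hence polymatroidal, so all of its powers have linear resolutions and Theorem~\ref{main} applies. For the inductive step I would delete the facet $F_r$: the graph $G'=G-F_r$ again satisfies the hypothesis with $r-1$ facets, and $I_G=I_{F_r}\cap I_{G'}\cap\bigcap_{\text{crossing edges at }F_r}(x_a,x_b)$. Mimicking the proof of Theorem~\ref{stargraph}, where the ``star variables'' $t_k$ were split off as a regular sequence on the Rees algebra and Lemma~\ref{special} finished the argument, I would split off the variables attached to $F_r$, reducing $R(I_G)$ modulo a regular sequence of (generic) linear forms to the Rees algebra of $I_{G'}$ together with a complete-intersection factor of the form $S[y]/(x_1y_1,\dots,x_sy_s)$, and then apply the inductive hypothesis and Lemma~\ref{special} (whose proof identifies the relevant approximation complex with a Koszul complex).

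The step I expect to be the main obstacle is precisely the interaction of the crossing edges with the Rees algebra. In the star-graph proof the defining ideal had the clean initial ideal $(x_1y_1,\dots,x_ny_n)$; here the additional minimal generators of $I_G$ coming from non-free vertices bring in additional variables $y_W$ and additional (non-binomial, after taking initial terms) generators of $J$, and one must show that the disjoint-facet and chordality hypotheses are strong enough to still provide, for each $i$, a term order on $T/(x_1,\dots,x_{i-1})$ under which every minimal generator of $\ini(J_{i-1})$ divisible by $x_i$ has the rigid form $x_iy^b$ with $x_jy^b\in\ini(J_{i-1})$ for all $j\ge i$. Verifying this is where the hypothesis on $G$ has to be used in full.
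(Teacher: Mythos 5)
Your proposal is a plan rather than a proof, and the step you yourself flag as ``the main obstacle'' is exactly the content of the theorem; nothing in the proposal closes it. Concretely: (i) the assertion that the ``swap'' binomials $x_{w_i'}y_W-x_{w_i}y_{W'}$ form a Gr\"obner basis of $J$ whose initial ideal has the rigid shape demanded by Proposition~\ref{conclusion} is never argued, and that shape is genuinely restrictive --- it requires, for a suitable chain of term orders on the successive quotients $T/(x_1,\dots,x_{i-1})$, that every generator of $\ini(J_{i-1})$ divisible by $x_i$ be of the form $x_iy^b$ with $x_jy^b\in\ini(J_{i-1})$ for \emph{all} $j\ge i$; the free-vertex structure only hands you one swap partner per facet, and you give no ordering of the variables or term orders for which the stronger condition could be checked. (ii) The inductive step is not sound as stated: unlike the star-graph situation, the variables of the deleted facet $F_r$ occur in \emph{every} minimal generator of $I_G$ (each generator omits exactly one vertex from each facet), and the $y$-variables of $R(I_G)$ are indexed by independent transversals, which because of the crossing edges form a proper, irregular subset of (transversals of $G'$)$\times F_r$; so there is no analogue of the clean splitting-off of the $t_k$'s as a regular sequence followed by Lemma~\ref{special}, and the claim that $R(I_G)$ modulo generic linear forms becomes $R(I_{G'})$ together with a complete-intersection factor is unsupported. (iii) You also lean on ``a chordality-type consequence of the hypothesis'' (a clique-tree pattern of crossing edges), but chordality is not assumed in this theorem and no such structure is available. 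Your first paragraph (unmixedness, hence generation in a single degree, hence componentwise linear $=$ linear resolution) is correct and matches the paper's observation, but it is the easy part.

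For comparison, the paper does not use the $d$-sequence criterion here at all. It verifies the $x$-condition of \cite{HHZ} directly: order the vertices facet by facet with the free vertex last in its facet, take the lexicographic order $x_1>\cdots>x_n>y_1>\cdots>y_m$ on the presentation $R(I_G)=K[x,y]/J$, and show that any minimal generator of $\ini_<(J)$ of positive $x$-degree is divisible by some $x_{i_1}y_{j_r}$ that already lies in $\ini_<(J)$; this is produced by a single swap $f=x_{i_1}y_{j_r}-x_{i_0}y_g\in J$, where $i_0$ is the free vertex of the facet containing $i_1$ (freeness guarantees $x_{i_1}(u_{j_r}/x_{i_0})$ is again a generator of $I_G$). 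Hence the generators of $\ini_<(J)$ have $x$-degree at most one, and since $I_G$ is generated in a single degree, the cited result of \cite{HHZ} gives linear resolutions for all powers --- no explicit Gr\"obner basis of $J$, no induction on facets, and no reduction to Lemma~\ref{special} is needed. If you want to salvage your route, the honest statement is that you would have to prove an analogue of this swap analysis anyway, at which point the $x$-condition argument is both shorter and strictly weaker in what it must verify.
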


\begin{proof}
Let $F_1,\ldots,F_m$ be the facets of $\Delta(G)$ which have a free vertex. Since $[n]=F_1\union F_2\union\cdots \union F_s$ is a disjoint union, we may assume that if $i\in F_p$, $j\in F_q$ and $p<q$, then $i<j$. In particular, $1\in F_1$ and $n\in F_s$. Moreover, we may assume that if $i_1,i_2\in F_i$ where $i_1$ is a nonfree vertex and $i_2$ is a free vertex, then $i_1<i_2$.

Observe that any minimal vertex cover of $G$ is of the following form:
\[
(F_1\setminus \{a_1\})\union (F_2\setminus \{a_2\})\union \cdots \union (F_s\setminus \{a_s\}), \quad \text{where} \quad a_j\in F_j.
\]
In particular, $G$ is unmixed and all generators of $I_G$ have degree $n-s$.

Now let $R(I_G)$ be the Rees algebra of vertex cover ideal of $G$. Suppose $u_1,\ldots,u_m$ is the minimal  set of monomial generators of $I_G$. Then there is a surjective $K$-algebra homomorphism
\[
K[x_1,\ldots,x_n,y_1,\ldots,y_m]\To R(I_G)\quad x_i\mapsto x_i \quad\text{and}\quad y_j\mapsto u_j,
\]
whose kernel $J$ is a binomial ideal. Let $<$ be the lexicographic order induced by the ordering $x_1>x_2>\cdots >x_n>y_1>\cdots>y_m$.  We are going to show that the generators of $\ini_>(J)$ are at most of degree $1$ in the $x_i$. This is the so-called $x$-condition and it implies that all powers of $I_G$ have a linear resolution, see \cite{HHZ}. Suppose that $x_{i_1}x_{i_2}\cdots x_{i_p}y_{j_1}y_{j_2}\cdots y_{j_q}$  with $i_1\leq i_2\leq \ldots \leq i_p$ is a minimal generator of $\ini_<(J)$. Then
\begin{eqnarray}
\label{nice}
x_{i_1}x_{i_2}\cdots x_{i_p}y_{j_1}y_{j_2}\cdots y_{j_q}-x_{k_1}x_{k_2}\cdots x_{k_p}y_{\ell_1}y_{\ell_2}\cdots y_{\ell_q}\in J.
\end{eqnarray}
It follows that $i_1<\min\{k_1,\ldots,k_p\}$, and there exists an index $j_r$ such that $x_{i_1}$ does not divide $u_{j_r}$. Say, $i_1\in F_c$. Then (\ref{nice}) implies that there exists $d\in [p]$ with $k_d\in F_c$. In particular, $i_1\neq \max\{i\:\; i\in F_d\}$. Let $i_0=\max\{i\:\; i\in F_d\}$. Since $i_0$ is a free vertex, it follows that $x_{i_1}(u_{j_r}/x_{i_0})$ is a minimal generator of $I_G$, say, $u_g$. Therefore, $f=x_{i_1}y_{j_r}-x_{i_0}y_g\in J$ and $\ini(f)=x_{i_1}y_{j_r}$ divides $x_{i_1}x_{i_2}\cdots x_{i_p}y_{j_1}y_{j_2}\cdots y_{j_q}$, as desired.
\end{proof}

\end{document}